%
\documentclass[a4paper,10pt]{article}
\usepackage{amssymb,amsmath,amsthm}
\usepackage{fullpage}
\usepackage{hyperref}
\usepackage{eucal}
\usepackage{color}
\usepackage{stackrel}
\usepackage{graphicx}
\newcommand{\ie}{\emph{i.e.}}
\newcommand{\eg}{\emph{e.g.}}
\newcommand{\cf}{\emph{cf.}}

\newcommand{\Real}{\mathbb{R}}
\newcommand{\Com}{\mathbb{C}}
\newcommand{\Nat}{\mathbb{N}}

\newcommand{\Dom}{\mathsf{D}}
\newcommand{\Ran}{\mathsf{R}}
\newcommand{\Ker}{\mathsf{N}}

\newcommand{\obal}{\mathop{\mathrm{span}}\nolimits}
\newcommand{\cof}{\mathop{\mathrm{cof}}\nolimits}
\newcommand{\defic}{\mathsf{def}}
\newcommand{\codim}{\mathop{\mathrm{codim}}\nolimits}
\newcommand{\nul}{\mathsf{nul}}
\newcommand{\mind}{\mathsf{m}}
\newcommand{\Disk}{\mathbb{D}}
\newcommand{\Circle}{\mathbb{T}}
\newcommand{\eps}{\varepsilon}
\newcommand{\sii}{L^2}

\newcommand{\Hilbert}{\mathcal{H}}

\newtheorem{Theorem}{Theorem}
\newtheorem{Lemma}{Lemma}
\newtheorem{Proposition}{Proposition}
\newtheorem{Corollary}{Corollary}

\theoremstyle{definition}
\newtheorem{Remark}{Remark}

\newtheorem{Definition}{Definition}
%

%
\def\OMIT#1{}
%
%
\usepackage[normalem]{ulem}
\definecolor{DarkGreen}{rgb}{0,0.5,0.1} 

\newcommand\soutD{\bgroup\markoverwith
{\textcolor{DarkGreen}{\rule[.5ex]{2pt}{1pt}}}\ULon}
\newcommand\soutP{\bgroup\markoverwith
{\textcolor{blue}{\rule[.5ex]{2pt}{1pt}}}\ULon}
\newcommand{\Hm}[1]{\leavevmode{\marginpar{\tiny%
$\hbox to 0mm{\hspace*{-0.5mm}$\leftarrow$\hss}%
\vcenter{\vrule depth 0.1mm height 0.1mm width \the\marginparwidth}%
\hbox to
0mm{\hss$\rightarrow$\hspace*{-0.5mm}}$\\\relax\raggedright #1}}}

\begin{document}
%
%
\title{\textbf{\LARGE Complex-self-adjointness}}
\author{M.~Cristina C\^amara\,$^a$ \ and \ David Krej\v{c}i\v{r}{\'\i}k\,$^b$}
\date{\small 
\vspace{-5ex}
\begin{quote}
\emph{
\begin{itemize}
\item[a)] 
Center for Mathematical Analysis, Geometry and Dynamical Systems, 
Mathematics Department, Instituto Superior T\'ecnico, Universidade de Lisboa, 
Av. Rovisco Pais, 1049-001, Lisbon, Portugal;
ccamara@math.tecnico.ulisboa.pt.%
\item[b)] 
Department of Mathematics, Faculty of Nuclear Sciences and 
Physical Engineering, Czech Technical University in Prague, 
Trojanova 13, 12000 Prague 2, Czechia; 
david.krejcirik@fjfi.cvut.cz.
\end{itemize}
}
\end{quote}
11 August 2022
}
\maketitle
\vspace{-5ex} 
\begin{abstract}
\noindent
We develop the concept of operators in Hilbert spaces
which are similar to their adjoints via antiunitary operators,
the latter being not necessarily involutive.  
We discuss extension theory, refined polar 
and singular-value decompositions,
and antilinear eigenfunction expansions.
The study is motivated by physical symmetries
in quantum mechanics with non-self-adjoint operators.

%
%
\end{abstract}
%

\section{Introduction}
%
A fundamental postulate of quantum mechanics says that 
physical observables are represented by self-adjoint operators
in Hilbert spaces. 
It does not mean that quantum theory is free of non-self-adjoint operators.
For instance, physical \emph{symmetries} are represented by 
unitary and antiunitary operators.
There are also quantum \emph{open systems} 
which are conveniently modelled by non-self-adjoint operators;
more generally, the latter arise in many other areas of physics 
where the time evolution is not conservative. 
What is more, recent years have brought motivations for considering
unconventional representations of observables by possibly 
non-self-adjoint operators which are however similar
to self-adjoint operators
(see~\cite{Bagarello-book} for a mathematical overview).

To cover these more general circumstances 
in the recent developments of quantum mechanics, 
the concept of linear operators~$H$ satisfying
the \emph{complex-self-adjointness} relation 
\begin{equation}\label{C-symmetry}
  H^* = C H C^{-1}
\end{equation}
with an \emph{antiunitary}~$C$ was proposed in 
\cite{BK,KKNS,KS-book}. 
Here the physical meaning of~$C$ is typically 
a time-reversal symmetry or its composition 
with the parity and charge symmetries.

If~$C$ is \emph{involutive}, \ie~$C^2=I$ (so that $C^{-1}=C$),
then $C$~is called \emph{conjugation}
and the operators satisfying~\eqref{C-symmetry}
are sometimes called \emph{$C$-self-adjoint}
(see \cite[Sec.~I.4]{Glazman} and \cite[Sec.~III.5]{Edmunds-Evans}).
This class of operators represent a well studied area of functional analysis,
notably due to the highly influential works of Garcia and Putinar
\cite{Garcia-Putinar_2006, Garcia-Putinar_2007}
(see also \cite{Prodan-Garcia-Putinar_2006}
and the excellent survey~\cite{Garcia-Prodan-Putinar_2014}).

However, it was pointed out in~\cite{KKNS} that
the involutive requirement about~$C$
is too restrictive once the spin is taken into account. 
Indeed, the time-reversal operator for fermionic 
systems modelled by the Pauli or Dirac operators 
is rather \emph{anti-involutive}, \ie~$C^2=-I$.
To cover these and other circumstances, 
the more general identity~\eqref{C-symmetry} 
was proposed in~\cite{KKNS}
(see also~\cite[Sec.~5.2.5.4]{KS-book}).
Unfortunately, despite the prevailing physical motivations,
there seem to be no systematic theory of the general
complex-self-adjoint operators and the objective of this paper
is precisely to initiate to fill in this gap.

The organisation of this paper is as follows.
In Section~\ref{Sec.pre}, we introduce the concept
of complex-symmetric and complex-self-adjoint operators
and summarise their basic properties.
The extension theory for symmetric and complex-symmetric operators 
is discussed in Section~\ref{Sec.extension}.
We manage to go beyond the involutive approach of
\cite[Sec.~III.5]{Edmunds-Evans} in several directions,
however, the question whether any $C$-symmetric operator 
admits a $C$-self-adjoint extension remains open
(unless~$C$ is involutive). 
In Section~\ref{Sec.polar}, 
we extend the results of \cite{Garcia-Putinar_2007}
about refined polar decompositions for $C$-self-adjoint operators
to the present general setting.
In Sections~\ref{Sec.singular} and~\ref{Sec.antilinear},
we try to extend the results of \cite{Garcia-Putinar_2007}
about refined singular-value decompositions
for compact $C$-self-adjoint operators
and antilinear eigenfunction expansions 
for $C$-self-adjoint operators with compact resolvents, respectively,
however, we manage to do so only under an extra hypothesis
about the simplicity of the singular values.
In Section~\ref{Sec.toy} we present
an operator in the spinorial Hilbert space $\sii(\Real;\Com^2)$
which is complex-symmetric but for which there is 
no obvious involutive conjugation. 
Further illustrations, 
including examples in model spaces,
are given in Section~\ref{Sec.examples}.

\section{Definitions and basic properties}\label{Sec.pre}
%
Let~$\Hilbert$ be a complex separable Hilbert space;
the norm and the inner product is denoted by 
$\|\cdot\|$ and $(\cdot,\cdot)$, respectively,
and our convention is that the latter is linear in the second component.
A \emph{linear} (respectively, \emph{antilinear}) operator~$A$ in~$\Hilbert$ 
is a function which sends every vector~$\psi$ 
in a (linear) subspace $\Dom(A) \subset \Hilbert$ 
called the \emph{domain} of~$A$
to a vector $A\psi \in \Hilbert$ and satisfies 
the \emph{additivity} condition 
$
  A(\phi + \psi) =  A\phi + A\psi
$
for every $\phi, \psi \in \Dom(A)$ 
together with the \emph{homogeneity} 
(respectively, \emph{antihomogeneity}) condition
$
  A(\alpha\psi) =  \alpha A\psi
$
(respectively,
$
  A(\alpha\psi) =  \bar\alpha A\psi
$	)
for every $\psi \in \Dom(A)$ and $\alpha \in \Com$. 
If $\Dom(A) = \Hilbert$, the operator~$A$ 
is said to be defined \emph{on}~$\Hilbert$. 
We denote the \emph{range} and \emph{kernel} of~$A$ by
$\Ran(A)$ and $\Ker(A)$, respectively.

An operator~$A$ on~$\Hilbert$ is said to be \emph{isometric}
if it preserves the norm: $\|A\psi\| = \|\psi\|$
for every $\psi \in \Hilbert$.
By the polarisation identity, it follows that 
$(A\phi,A\psi) = (\phi,\psi)$ 
(respectively, $(A\phi,A\psi) = (\psi,\phi)$)
for every $\phi,\psi \in \Hilbert$,
provided that~$A$ is linear (respectively, antilinear).
In either case, the adjoint of~$A$ coincides with the left inverse of~$A$.
More specifically, in distinction to some recent papers
(see, \eg, \cite{Camara-Klis-Garlicka-Ptak_2019,Ko-Lee-Lee_2022}),
we use the same notation~$A^*$ for the adjoint of 
any densely defined operator~$A$:
it satisfies $(\phi,A\psi) = (A^*\phi,\psi)$ 
(respectively, $(\phi,A\psi) = \overline{(A^*\phi,\psi)}$),
for every $\psi \in \Dom(A)$ and $\phi \in \Dom(A^*)$,
if~$A$ is linear (respectively, antilinear).  
Then the left-inverse property 
for the isometric operator~$A$
precisely means that $A^*A = I$,  
where~$I$ stands for the identity operator on~$\Hilbert$. 
If, in addition, $\Ran(A) = \Hilbert$, 
then~$A$ is called \emph{unitary} or \emph{antiunitary}
depending on whether~$A$ is linear or antilinear, respectively.
Then the adjoint of~$A$ coincides with the right inverse of~$A$ too,
\ie\ $AA^* = I$.
In summary, $A$~is bijective as an operator on~$\Hilbert$ onto~$\Hilbert$  
and $A^{-1} = A^*$  
if~$A$ is unitary or antiunitary.
Any unitary or antiunitary operator~$A$ 
is bounded and boundedly invertible,
in fact $\|A\| = 1 = \|A^{-1}\|$.

In the sequel, $H$~stands for a generic linear operator in~$\Hilbert$.
\begin{Definition}
We say that~$H$ is \emph{complex-symmetric} \emph{with respect to}~$C$ 
(or briefly $C$-\emph{symmetric})
if~$H$ is densely defined and
there exists an antiunitary operator~$C$ in~$\Hilbert$
such that the adjoint~$H^*$
is an extension of $CHC^{-1}$, 
\ie\  $CHC^{-1} \subset H^*$.

We say that~$H$ is \emph{complex-self-adjoint} 
\emph{with respect to}~$C$
(or briefly $C$-\emph{self-adjoint})
if~$H$ is densely defined and 
there exists an antiunitary operator~$C$ in~$\Hilbert$
such that $CHC^{-1} = H^*$.

We say that~$H$ is \emph{complex-symmetric} 
(respectively, \emph{complex-self-adjoint})
if there exists an antiunitary operator 
with respect to which it is complex-symmetric
(respectively, complex-self-adjoint).
\end{Definition}
\begin{Remark}\label{Rem1} 
Since we allow for the concise notations 
``$C$-symmetric'' and ``$C$-self-adjoint'',
it is important to emphasise that our definition generalises 
the usual concepts of \emph{just involutive} antiunitary~$C$ 
(see \cite[Sec.~I.4]{Glazman} and \cite[Sec.~III.5]{Edmunds-Evans}
for the traditional references);
in this special case, $C$~is called \emph{conjugation}.
Occasionally, we shall use the same terminology 
in the present more general setting
when~$C$ is not necessarily involutive.

At the same time, we warn the reader that the same notation
is used in the theory of Krein spaces 
where~$C$ is a \emph{linear involution}
(see, \eg, \cite{Azizov-Iokhvidov1_1989}).
Even more confusingly, in physics literature,
the property of $H$ being ``$C$-symmetric'' 
(with $C$ unitary or antiunitary)
occasionally means the commutation relation $[C,H]=0$
(which precisely means $CH = HC$).
We refer to~\cite[Sec.~5.2.5]{KS-book} for an overview 
of these different notions.
\end{Remark}

The $C$-self-adjointness of~$H$ 
particularly requires that
$C:\Dom(H) \to \Dom(H^*)$ is a bijection.
Obviously, $H$ is $C$-self-adjoint,
if, and only if, $H^*$ is $C^{-1}$-self-adjoint.
 
If~$C$ is antiunitary,
it is easily seen that 
\begin{equation}\label{crucial}
  (\phi,C\psi) = (\psi,C^{-1}\phi)
\end{equation}  
for every $\phi,\psi \in \Hilbert$.
Furthermore, for any densely defined linear operator~$H$,
it is easily shown that
\begin{equation}\label{5.3}
  (CHC^{-1})^* = CH^*C^{-1}
  \,.  
\end{equation}  
It follows that~$H$ is $C$-symmetric
if, and only if,
$H \subset C^{-1}H^*C = (C^{-1}HC)^*$
and this implies that~$H$ is closable,
the closure being also $C$-symmetric.
At the same time, by the closedness of the adjoint,
any complex-self-adjoint operator is automatically closed.
 	
Recall that the \emph{spectrum} $\sigma(H)$ of 
any closed linear operator~$H$ in~$\Hilbert$  
is the set of those complex numbers~$\lambda$ for which
$H-\lambda I:\Dom(H)\to\Hilbert$ is not bijective.
The \emph{resolvent set} is the complement $\rho(H):=\Com\setminus\sigma(H)$.
The \emph{point spectrum} $\sigma_{\mathrm{p}}(H)$ of~$H$ 
is the set of eigenvalues of~$H$
(\ie~the operator $H-\lambda I$ is not injective).
For the surjectivity, one says that 
$\lambda \in \sigma(H)$ belongs to the \emph{continuous spectrum}
$\sigma_{\mathrm{c}}(H)$
(respectively, \emph{residual spectrum} $\sigma_{\mathrm{r}}(H)$) of~$H$ 
if $\lambda\not\in\sigma_\mathrm{p}(H)$
and the closure of the range $\Ran(H-\lambda I)$ equals~$\Hilbert$  
(respectively, the closure is a proper subset of~$\Hilbert$).
If~$H$ is densely defined, then $\Ker(H^*) = \Ran(H)^\bot$,
where~$\bot$ denotes the orthogonal complement; 	
consequently, one has the general characterisation
\begin{equation}\label{residual}
  \sigma_{\mathrm{r}}(H) 
  = \left\{\lambda \not\in \sigma_{\mathrm{p}}(H): \
  \bar{\lambda} \in \sigma_{\mathrm{p}}(H^*)\right\}
  \,.
\end{equation}

It is well known that the residual spectrum 
of any self-adjoint operator is empty.
It turns out that the same holds true 
for complex-self-adjoint operators.
In the context of applications in quantum mechanics,
this simple observation goes back to~\cite{BK} and~\cite{KKNS}
in the involutive and the general case, respectively. 	
\begin{Proposition}\label{Prop.residual} 
Let~$H$ be a linear operator 
which is complex-self-adjoint. Then 
$$
  \sigma_{\mathrm{r}}(H) = \varnothing \,.
$$
\end{Proposition}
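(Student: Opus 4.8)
The plan is to exploit the characterisation~\eqref{residual} of the residual spectrum, which reduces the claim to a statement about point spectra: it suffices to show that for every $\lambda\in\Com$ the membership $\bar\lambda\in\sigma_{\mathrm{p}}(H^*)$ already forces $\lambda\in\sigma_{\mathrm{p}}(H)$. Indeed, once this implication is established, the set on the right-hand side of~\eqref{residual}, which requires simultaneously $\lambda\notin\sigma_{\mathrm{p}}(H)$ and $\bar\lambda\in\sigma_{\mathrm{p}}(H^*)$, is empty, and therefore $\sigma_{\mathrm{r}}(H)=\varnothing$.

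To establish the implication, suppose $\bar\lambda\in\sigma_{\mathrm{p}}(H^*)$, so that there is a nonzero $\psi\in\Dom(H^*)$ with $H^*\psi=\bar\lambda\psi$. I would invoke the complex-self-adjointness $H^*=CHC^{-1}$ to rewrite this as $CHC^{-1}\psi=\bar\lambda\psi$ and then set $\phi:=C^{-1}\psi$. Applying $C^{-1}$ to both sides and using the antihomogeneity of the antiunitary $C^{-1}$, which turns the scalar $\bar\lambda$ into $\lambda$, yields $H\phi=\lambda\phi$. It then remains to check that $\phi$ is an admissible eigenvector: since $C^{-1}$ is a bijection of~$\Hilbert$, one has $\phi\neq 0$; and since the $C$-self-adjointness entails that $C\colon\Dom(H)\to\Dom(H^*)$ is a bijection (equivalently $C^{-1}\Dom(H^*)=\Dom(H)$), the vector $\phi$ indeed lies in $\Dom(H)$. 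Hence $\lambda\in\sigma_{\mathrm{p}}(H)$, as desired.

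The argument is essentially a bookkeeping of the antilinear intertwining, and there is no serious analytic obstacle. The only points requiring genuine care are, first, keeping track of the complex conjugation produced by the antihomogeneity of~$C^{-1}$ --- this is exactly what converts the eigenvalue $\bar\lambda$ of~$H^*$ into the eigenvalue $\lambda$ of~$H$ --- and, second, confirming that the candidate eigenvector $\phi=C^{-1}\psi$ actually belongs to $\Dom(H)$, which is guaranteed by the domain bijection inherent in the relation $CHC^{-1}=H^*$ rather than by its merely formal action on vectors. Note that the full equality $CHC^{-1}=H^*$ (not just the inclusion characterising $C$-symmetry) is what is used, and that no involutivity of~$C$ is needed, so the proof applies in the general complex-self-adjoint setting.
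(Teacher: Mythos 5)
Your proposal is correct and follows essentially the same route as the paper: both reduce the claim to the characterisation~\eqref{residual} and use the relation $H^*=CHC^{-1}$ together with the antilinearity of~$C$ to transport eigenvectors of~$H^*$ at~$\bar\lambda$ to eigenvectors of~$H$ at~$\lambda$. Your write-up merely spells out the domain bookkeeping ($C^{-1}\Dom(H^*)=\Dom(H)$) that the paper leaves implicit.
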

\begin{proof}
Let~$H$ satisfy~\eqref{C-symmetry} with some antiunitary operator~$C$.
Then~$\lambda$ is an eigenvalue of~$H$ 
(with eigenvector $\psi \in \Dom(H)$)
if, and only if,  	
$\bar\lambda$ is an eigenvalue of~$H^*$
(with eigenvector $C\psi \in \Dom(H^*)$).
So the absence of elements in the residual spectrum follows 
from the general formula~\eqref{residual}.
\end{proof} 	
%

\section{Extension theory}\label{Sec.extension}
%
The \emph{nullity} (respectively, \emph{deficiency}) 
of a linear operator~$H$ in~$\Hilbert$
is $\nul(H) := \dim\Ker(H)$
(respectively, $\defic(H) := \codim\Ran(H)$).
Recall that the codimension of a subspace $\Hilbert' \subset \Hilbert$
is defined as the dimension of the quotient space $\Hilbert/\Hilbert'$.
If~$\Hilbert'$ is closed, then $\codim\Hilbert' = \dim\Hilbert'^\bot$,
where~$\bot$ denotes the orthogonal complement.

If~$H$ is closed, it is well known 
(see, \eg, \cite[Thm.~III.2.3]{Edmunds-Evans})
that $\defic(H - \lambda I)$
is constant in each connected component of the exterior 
of the numerical range of~$H$. 
In particular, if~$H$ is symmetric
(\ie\ $H$~is densely defined and $H \subset H^*$), 
then the \emph{deficiency indices}:
$$
  \mind_\pm(H) := \defic(H - \lambda I)
  \,, \qquad
  \lambda \in \Com_\mp := \{\lambda \in \Com: 
  \Im\lambda \lessgtr 0 \}
$$
are well defined and constant. 
By extension theory of symmetric operators 
(see, \eg, \cite[Sec.~III.4]{Edmunds-Evans}),
a symmetric operator $H$~admits a self-adjoint extension
if, and only if, $\mind_+(H) = \mind_-(H)$;
$H$~is self-adjoint if, and only if, $\mind_+(H) = 0 = \mind_-(H)$.

There are well-known examples of symmetric operators
which do not admit any self-adjoint extension.
It is interesting that these pathological situations
can be excluded for operators admitting an antiunitary symmetry
in the sense of the following definition.

\begin{Definition}
We say that~$H$ is \emph{real} \emph{with respect to}~$C$ 
(or briefly $C$-\emph{real})
if there exists an antiunitary operator~$C$ in~$\Hilbert$
such that $CHC^{-1} = H$.
\end{Definition}
\begin{Remark} 
The definition is an extension of the terminology of
\cite[Thm.~III.5.1]{Edmunds-Evans} to not necessarily
involutive operators~$C$.
In physical terms, the present general notion is equivalent to
the commutation relation $[C,H]=0$ (\cf~Remark~\ref{Rem1}).
\end{Remark}

The following theorem 
is an extension of \cite[Thm.~III.5.3]{Edmunds-Evans},
where the symmetry~$C$ was assumed to be involutive. 

\begin{Theorem} 
Let~$H$ be a closed symmetric operator which is $C$-real.
Then $\mind_+(H) = \mind_-(H)$.
Consequently, $H$~admits a self-adjoint extension.
\end{Theorem}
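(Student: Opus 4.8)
The plan is to reduce the equality of deficiency indices to the observation that the antiunitary symmetry~$C$ sets up a bijection between the two deficiency subspaces of~$H^*$. I first record the concrete description of the deficiency indices. For a closed symmetric operator and $\lambda$ off the real axis, the range $\Ran(H-\lambda I)$ is closed, so that $\defic(H-\lambda I) = \dim\Ker\big((H-\lambda I)^*\big) = \dim\Ker(H^*-\bar\lambda I)$ by the identity $\Ker(H^*)=\Ran(H)^\bot$ recalled above; taking $\lambda = \mp i$ gives $\mind_\pm(H) = \dim\Ker(H^*\mp iI)$. Hence the theorem reduces to proving that $\dim\Ker(H^*-iI) = \dim\Ker(H^*+iI)$.

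Next I would transfer the $C$-reality from~$H$ to~$H^*$. Applying~\eqref{5.3} to the hypothesis $CHC^{-1}=H$ yields $CH^*C^{-1} = (CHC^{-1})^* = H^*$, so that $H^*$ is itself $C$-real and in particular $H^*C = CH^*$ holds on the relevant domains.

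The heart of the argument is then a single antilinear interchange. Suppose $\psi\in\Ker(H^*-iI)$, so that $H^*\psi = i\psi$. Using $H^*C = CH^*$ together with the antihomogeneity of~$C$ I compute $H^*(C\psi) = C(H^*\psi) = C(i\psi) = \bar i\,C\psi = -i\,C\psi$, so that $C\psi\in\Ker(H^*+iI)$. Thus $C$ maps $\Ker(H^*-iI)$ into $\Ker(H^*+iI)$, and running the same computation with $C^{-1}$ (which is likewise antiunitary and satisfies $C^{-1}H^*C=H^*$) gives the opposite inclusion; therefore~$C$ restricts to a bijection between the two deficiency subspaces.

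Finally, since an antiunitary operator is injective and antilinear, it carries linearly independent families to linearly independent families and so preserves dimension. Consequently $\mind_+(H) = \dim\Ker(H^*-iI) = \dim\Ker(H^*+iI) = \mind_-(H)$, and the existence of a self-adjoint extension follows at once from the extension theory of symmetric operators quoted above, which produces such an extension precisely when the deficiency indices coincide. I do not expect a genuine obstacle in this argument; the only point needing a little care is the bookkeeping of the antilinear action of~$C$, which is exactly what turns the eigenvalue~$i$ into~$-i$ and thereby swaps the two deficiency subspaces.
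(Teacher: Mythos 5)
Your proposal is correct and follows essentially the same route as the paper: both transfer the $C$-reality to $H^*$ via the identity $(CHC^{-1})^*=CH^*C^{-1}$, identify $\mind_\pm(H)$ with $\dim\Ker(H^*\mp iI)$, and use the antilinearity of $C$ to turn the eigenvalue $i$ into $-i$, so that $\Ker(H^*-iI)=C\Ker(H^*+iI)$. The only cosmetic difference is that the paper writes this as a chain of kernel identities while you verify it elementwise.
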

\begin{proof}
Since~$H$ is $C$-real, it follows from~\eqref{5.3} 
that also the adjoint~$H^*$ is $C$-real.
Note also that the range $\Ran(H-\lambda I)$ is closed
for $\lambda \in \Com_\pm$.
Consequently,
$$
\begin{aligned}
  \Hilbert / \Ran(H + i I)
  \cong \Ran(H + i I)^\bot 
  &= \Ker(H^* - i I) \\
  &= \Ker((CHC^{-1})^*-iI) \\
  &= \Ker(CH^*C^{-1}-iI) \\
  &= \Ker(C(H^*+iI)C^{-1}) \\
  &= C \Ker(H^*+iI)
  \,.
\end{aligned}  
$$
Since $\mind_\pm(H) = \dim\Ker(H^* \mp i I)$
and~$C$ is a bijection,
we have $\mind_+(H) = \mind_-(H)$.
\end{proof}

We now turn to complex-symmetric operators. 	 
The following simple observation
is an extension of \cite[Lem.~III.5.4]{Edmunds-Evans},
where the operator is assumed to be 
complex-symmetric/complex-self-adjoint 
with respect to (involutive) conjugations. 

\begin{Lemma}\label{Lem.5.4} 
Let~$H$ be a closed complex-symmetric operator.
Then, for any $\lambda \in \Com$,
\begin{equation}\label{inclusion}
  \Ker(H-\lambda I) \subset C^{-1}\Ker(H^*-\bar\lambda I) \,,
\end{equation}
so
$$
  \nul(H-\lambda I) \leq \nul(H^*-\bar\lambda I)
  \,.
$$
If~$H$ is complex-self-adjoint, 
the inclusion and the inequality become equalities.
\end{Lemma}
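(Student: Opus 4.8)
The plan is to exploit the defining inclusion $CHC^{-1} \subset H^*$ directly, translating the eigenvalue equation for $H$ into one for $H^*$ via the antiunitary $C$. First I would take $\psi \in \Ker(H-\lambda I)$, so that $\psi \in \Dom(H)$ and $H\psi = \lambda\psi$. The natural candidate for an element of $\Ker(H^*-\bar\lambda I)$ is $C\psi$; to verify this I set $\phi := C\psi$ and compute $H^*\phi$. Since $H$ is $C$-symmetric, $CHC^{-1} \subset H^*$, and because $\phi = C\psi$ lies in $C\,\Dom(H) = \Dom(CHC^{-1})$, we may apply the inclusion to get
\begin{equation*}
  H^*\phi = CHC^{-1}\phi = CHC^{-1}C\psi = CH\psi = C(\lambda\psi) = \bar\lambda\,C\psi = \bar\lambda\,\phi,
\end{equation*}
where the crucial antilinearity of $C$ turns the scalar $\lambda$ into $\bar\lambda$. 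Hence $C\psi \in \Ker(H^*-\bar\lambda I)$, i.e.\ $\psi \in C^{-1}\Ker(H^*-\bar\lambda I)$, which is exactly the inclusion~\eqref{inclusion}.

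For the dimension inequality I would pass from the set inclusion to cardinalities of bases. Since $C$ is antiunitary it is in particular a bijection, so $C^{-1}$ maps $\Ker(H^*-\bar\lambda I)$ onto a subspace of $\Hilbert$ of the same dimension as $\Ker(H^*-\bar\lambda I)$ (antiunitaries send orthonormal bases to orthonormal bases, so the dimension is preserved). The inclusion \eqref{inclusion} then gives
\begin{equation*}
  \nul(H-\lambda I) = \dim\Ker(H-\lambda I) \leq \dim C^{-1}\Ker(H^*-\bar\lambda I) = \dim\Ker(H^*-\bar\lambda I) = \nul(H^*-\bar\lambda I).
\end{equation*}

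For the final assertion, I would observe that if $H$ is $C$-self-adjoint then $CHC^{-1} = H^*$ is an \emph{equality}, so the computation above becomes reversible. Concretely, $H^*$ is then $C^{-1}$-self-adjoint (as noted after the Definition), and applying the already-established inclusion to the pair $(H^*, C^{-1})$ with spectral parameter $\bar\lambda$ yields the reverse inclusion $\Ker(H^*-\bar\lambda I) \subset C\,\Ker(H-\lambda I)$, whence $C^{-1}\Ker(H^*-\bar\lambda I) \subset \Ker(H-\lambda I)$. Combined with \eqref{inclusion} this forces equality of the two kernels and of the two nullities. I expect no serious obstacle here: the only point requiring care is bookkeeping with domains—ensuring that $C\psi$ genuinely lies in $\Dom(CHC^{-1})$ so that the inclusion $CHC^{-1}\subset H^*$ can be invoked—and correctly tracking the conjugation of the scalar under the antilinear $C$. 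The equality case rests entirely on the symmetry between $H$ and $H^*$ afforded by genuine $C$-self-adjointness, so invoking the $C^{-1}$-self-adjointness of $H^*$ is the cleanest route.
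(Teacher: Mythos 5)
Your proof is correct and follows essentially the same route as the paper: the inclusion is obtained by applying $C$ to the eigenvalue equation and using antilinearity to conjugate the scalar, and the dimension bound follows since the antiunitary $C$ preserves dimension. Your handling of the equality case (applying the established inclusion to the $C^{-1}$-self-adjoint operator $H^*$) is a minor cosmetic variation on the paper's argument, which instead notes directly that $C:\Dom(H)\to\Dom(H^*)$ is a bijection so that the chain of implications reverses; both are valid.
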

\begin{proof} 
If~$H$ is $C$-symmetric, 
then $H \subset C^{-1} H^* C$, so 
for $f \in \Dom(H)$
$$
\begin{aligned}
  (H-\lambda I)f = 0
  &\Longleftrightarrow  
  (C^{-1}H^*C-\lambda I)f = 0 \\
  &\Longleftrightarrow  
  C^{-1}(H^*-\bar\lambda I)C f =0 \\
  &\Longleftrightarrow 
  (H^*-\bar\lambda I)C f =0 \\
  &\Longleftrightarrow 
  f \in C^{-1} \Ker(H^*-\bar\lambda I)
  \,.
\end{aligned}  
$$ 
Thus, \eqref{inclusion}~holds.  
If~$H$ is $C$-self-adjoint, 
then $H = C^{-1} H^* C$, 
so $f \in \Dom(H)$ if, and only if, $Cf \in \Dom(H^*)$,
therefore we have equality in~\eqref{inclusion}.
\end{proof}

Before we proceed further, 
let us recall 
(see, \eg, \cite[Def.~III.2.5]{Edmunds-Evans})
that the \emph{field of regularity} $\Pi(H)$ of~$H$ 	 
is defined as the set of complex points~$\lambda$
for which there exist positive constants $k_\lambda$ 
such that $\|(H-\lambda I) \psi\| \geq k_\lambda \|\psi\|$
for all $\psi \in \Dom(H)$. 	
If~$H$ is closed, then $\Pi(H)$ coincides 
with the set of points~$\lambda$ for which 
$\Ran(H-\lambda I)$ is closed and $\nul(H-\lambda I)=0$.
In this case, $\lambda \mapsto \defic(H-\lambda I)$ 
is constant in any connected component of $\Pi(H)$. 
 	 
Now we are in a position to state the following generalisation
of \cite[Thm.~III.5.5]{Edmunds-Evans}.

\begin{Theorem} 
A closed $C$-symmetric operator is $C$-self-adjoint
if, and only if, $\defic(H-\lambda I)=0$ for some,
and hence all, $\lambda \in \Pi(H)$.  
\end{Theorem}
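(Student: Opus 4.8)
The plan is to establish the chain of implications relating $C$-self-adjointness to the vanishing of the deficiency at one, and then at every, point of $\Pi(H)$. The starting observation is that for $\lambda \in \Pi(H)$ the range $\Ran(H-\lambda I)$ is closed, so that
\[
  \defic(H-\lambda I) = \dim \Ran(H-\lambda I)^\bot = \nul(H^*-\bar\lambda I),
\]
where I use that $\codim$ equals the dimension of the orthogonal complement for closed subspaces, together with $\Ran(A)^\bot = \Ker(A^*)$ and $(H-\lambda I)^* = H^*-\bar\lambda I$. This reduces the deficiency condition to the injectivity of $H^*-\bar\lambda I$.

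For the easy direction, suppose $H$ is $C$-self-adjoint. For $\lambda \in \Pi(H)$ one has $\nul(H-\lambda I)=0$ by the characterisation of the field of regularity recalled above, and Lemma~\ref{Lem.5.4} in its equality form then gives $\nul(H^*-\bar\lambda I) = \nul(H-\lambda I) = 0$, whence $\defic(H-\lambda I)=0$ for \emph{every} $\lambda \in \Pi(H)$. Thus $C$-self-adjointness already yields the condition at all points of $\Pi(H)$, and this is what lets me upgrade ``for some'' to ``for all'': once I show that vanishing of the deficiency at a single point of $\Pi(H)$ forces $C$-self-adjointness, the three statements (self-adjointness, vanishing at some point, vanishing at all points) become equivalent and the quantifier issue dissolves without any appeal to connectedness of $\Pi(H)$.

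It remains to prove that $\defic(H-\lambda_0 I)=0$ for a single $\lambda_0 \in \Pi(H)$ implies $CHC^{-1}=H^*$. Since $\lambda_0 \in \Pi(H)$ and $H$ is closed, $H-\lambda_0 I$ is injective with closed range; adding $\defic(H-\lambda_0 I)=0$ makes this range all of $\Hilbert$, so $H-\lambda_0 I$ is a bijection of $\Dom(H)$ onto $\Hilbert$ and $\lambda_0 \in \rho(H)$. By the standard duality $\lambda_0 \in \rho(H) \Leftrightarrow \bar\lambda_0 \in \rho(H^*)$ for closed densely defined operators, $H^*-\bar\lambda_0 I$ is likewise a bijection of $\Dom(H^*)$ onto $\Hilbert$. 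The $C$-symmetry $CHC^{-1} \subset H^*$, together with the antilinearity identity $C\lambda_0 C^{-1} = \bar\lambda_0$, yields
\[
  C(H-\lambda_0 I)C^{-1} \subset H^*-\bar\lambda_0 I,
\]
and the left-hand side is a bijection of $C\Dom(H)$ onto $\Hilbert$ because $C$ is a bijection of $\Hilbert$.

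The decisive step is then the elementary observation that a bijection onto $\Hilbert$ contained in another bijection onto $\Hilbert$ must coincide with it: given $g$ in the larger domain, its image lies in the range of the smaller operator, and injectivity of the larger operator forces the two preimages to agree, so $g$ already lies in the smaller domain. Applying this gives $C(H-\lambda_0 I)C^{-1} = H^*-\bar\lambda_0 I$, hence $CHC^{-1}=H^*$, \ie\ $H$ is $C$-self-adjoint. I expect the only delicate points to be bookkeeping rather than conceptual: verifying that the domains transform as claimed under conjugation, so that $C(H-\lambda_0 I)C^{-1}$ is genuinely defined on all of $C\Dom(H)$ and surjective, and invoking correctly the passage $\lambda_0 \in \rho(H) \Rightarrow \bar\lambda_0 \in \rho(H^*)$. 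The conceptual crux is precisely this surjectivity-plus-injectivity argument, which replaces the von Neumann deficiency-space computation used in the involutive treatment of \cite[Thm.~III.5.5]{Edmunds-Evans} and applies verbatim to non-involutive $C$.
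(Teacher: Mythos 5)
Your proof is correct and follows essentially the same route as the paper: the forward direction via Lemma~\ref{Lem.5.4} and the closedness of $\Ran(H-\lambda I)$ on $\Pi(H)$, and the converse via the observation that the surjective operator $C(H-\lambda_0 I)C^{-1}=CHC^{-1}-\bar\lambda_0 I$, being contained in $H^*-\bar\lambda_0 I$ whose kernel is $\Ran(H-\lambda_0 I)^\bot=\{0\}$, must coincide with it. Your detour through $\lambda_0\in\rho(H)\Leftrightarrow\bar\lambda_0\in\rho(H^*)$ is harmless but unnecessary, since only the injectivity of $H^*-\bar\lambda_0 I$ is needed, which you already have from the kernel--range duality.
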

\begin{proof} 
If~$H$ is complex-self-adjoint and $\lambda \in \Pi(H)$,
then $\Ran(H-\lambda I)$ is closed
and Lemma~\ref{Lem.5.4} implies
$$
  \defic(H-\lambda I) 
  = \nul(H^*-\bar\lambda I)
  = \nul(H-\lambda I)
  =0 \,.
$$ 

Conversely, suppose that $\defic(H-\lambda I)=0$
for some $\lambda \in \Pi(H)$.
Then $\Ran(H-\lambda I)=\Hilbert$ and 
\begin{equation}\label{5.5}
  \Ran(CHC^{-1}-\bar\lambda I)
  = \Ran(C(H-\lambda I)C^{-1})
  = \Hilbert 
  \,.
\end{equation}
If $CHC^{-1} \not= H^*$, there exists a non-zero
$\psi \in \Dom(H^*) \setminus \Dom(CHC^{-1})$.  
From~\eqref{5.5}, 
$$
  (H^* - \bar\lambda I)\psi 
  =  (CHC^{-1}-\bar\lambda I) \phi 
$$
for some $\phi \in \Dom(CHC^{-1})$
and since $CHC^{-1} \subset H^*$ we have 
$(H^* - \bar\lambda I) (\psi-\phi) = 0$.
That is, $\psi-\phi \in \Ker(H^* - \bar\lambda I)$.
But 
$
  \Ker(H^* - \bar\lambda I)
  = \Ran(H - \lambda I)^\bot
  = \{0\}
$
and consequently $\psi = \phi \in \Dom(CHC^{-1})$,
contradicting the hypothesis that $CHC^{-1} \not= H^*$. 
\end{proof}

Now, following \cite[Sec.~III.5.2]{Edmunds-Evans},
given a $C$-symmetric operator~$H$,
we define $A:=H$ and $B:=C^{-1} H C$.
Then, for every $\psi \in \Dom(A) = \Dom(H)$
and $\phi \in \Dom(B) = C^{-1}\Dom(H)$, 
one has 
$$
  (\phi,A\psi)
  = (\phi,C^{-1}H^*C\psi)
  = (H^*C\psi,C\phi)
  = (C\psi,HC\phi)
  = (B\phi,\psi)
  \,,
$$ 	 
where the second and last equalities employ~\eqref{crucial}. 	 
Hence, $A$ and~$B$ are adjoint to each other.  	 
Also if $\lambda \in \Pi(A)$ and $\psi \in \Dom(B)$,
$$
  \|(B-\bar\lambda)\psi\|
  = \|C^{-1}(H-\lambda I)C\psi\|
  \geq k_\lambda \;\! \|C\psi\|
  = k_\lambda \;\! \|\psi\|
  \,,
$$ 	 
so that $\bar\lambda \in \Pi(B)$.
Then the theory of extensions of closed operators
(see \cite[Sec.~III.3]{Edmunds-Evans})
yields the general formulae
$$
\begin{aligned}
  \Dom(B^*) &= \Dom(A) \dot{+} \Ker((A^*-\bar\lambda I)(B^*-\lambda I)) 
  \,,
  \\ 
  \Dom(A^*) &= \Dom(B) \dot{+} \Ker((B^*-\lambda I)(A^*-\bar\lambda I)) 
  \,.
\end{aligned}  
$$
Moreover, if $\defic(A-\lambda I)$ and $\defic(B-\bar\lambda I)$
are finite, then
$$
  \dim(\Dom(B^*)/\Dom(A))
  = \dim(\Dom(A^*)/\Dom(B))
  = \defic(A-\lambda I) + \defic(B-\bar\lambda I)\
  \,.
$$
In our case, if $\lambda \in \Pi(H)$, then
$$
\begin{aligned}
  \defic(A-\lambda I)
  &= \nul(H^*-\bar\lambda I)
  \\
  &= \nul(C^{-1}(H^*-\bar\lambda I)C)
  \\
  &= \nul(C^{-1}H^*C-\lambda I)
  \\
  &= \defic((C^{-1}H^*C)^*-\bar\lambda I)
  \\
  &= \defic(B-\bar\lambda I)
  \,.
\end{aligned}  
$$ 	 
We therefore obtain the following generalisation of 	 
\cite[Thm.~III.5.6]{Edmunds-Evans}.
\begin{Theorem} 
Let~$H$ be a closed $C$-symmetric operator 
with $\Pi(H) \not= \varnothing$. 
Then, for any $\lambda \in \Pi(H)$,
$$
  \Dom(C^{-1}H^* C) 
  = \Dom(H) \dot{+} \Ker((H^*-\bar\lambda I)(C^{-1}H^*C-\lambda I)) 
  \,.
$$   
If $\defic(H-\lambda I) < \infty$,
$$
  \dim(\Dom(C^{-1}H^* C)/\Dom(H))
  = 2 \defic(H-\lambda I)
  \,.
$$
\end{Theorem}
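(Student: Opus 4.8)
The plan is to obtain both assertions as direct specialisations of the two general extension formulae displayed immediately before the statement, applied to the pair $A:=H$ and $B:=C^{-1}HC$. All the structural hypotheses needed for those formulae have already been secured in the preceding discussion: $A$ and~$B$ are adjoint to each other, both are closed (conjugation by the bounded, boundedly invertible antiunitary~$C$ preserving the closedness of~$H$), and the regularity is transferred, so that $\lambda\in\Pi(H)=\Pi(A)$ forces $\bar\lambda\in\Pi(B)$. Hence the only genuinely new bookkeeping is the identification of the adjoint~$B^*$.

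First I would compute~$B^*$. Since $C^{-1}$ is itself antiunitary and $B=C^{-1}H(C^{-1})^{-1}$, formula~\eqref{5.3} applied with~$C$ replaced by~$C^{-1}$ yields
$$
  B^* = \bigl(C^{-1}H(C^{-1})^{-1}\bigr)^* = C^{-1}H^*(C^{-1})^{-1} = C^{-1}H^*C
  \,,
$$
so that $\Dom(B^*)=\Dom(C^{-1}H^*C)$. Substituting $A=H$, $A^*=H^*$ and this expression for~$B^*$ into the first general formula $\Dom(B^*)=\Dom(A)\,\dot{+}\,\Ker((A^*-\bar\lambda I)(B^*-\lambda I))$ reproduces exactly the claimed direct-sum decomposition of $\Dom(C^{-1}H^*C)$, valid for every $\lambda\in\Pi(H)$.

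For the dimension count I would invoke the finite-deficiency case of the same theory, which gives $\dim(\Dom(B^*)/\Dom(A))=\defic(A-\lambda I)+\defic(B-\bar\lambda I)$ whenever both summands are finite. The chain of equalities established just above the statement already shows $\defic(A-\lambda I)=\defic(B-\bar\lambda I)=\defic(H-\lambda I)$; thus the hypothesis $\defic(H-\lambda I)<\infty$ secures the required finiteness, and the sum collapses to $2\,\defic(H-\lambda I)$. Since $\Dom(B^*)/\Dom(A)=\Dom(C^{-1}H^*C)/\Dom(H)$, this is the second assertion.

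The proof thus carries no serious obstacle of its own: the real content lies in the prior verification that $A$ and~$B$ are adjoint to each other with the regularity transfer $\lambda\in\Pi(A)\Rightarrow\bar\lambda\in\Pi(B)$, together with the correct evaluation of~$B^*$. The one point to watch is purely notational, namely applying~\eqref{5.3} to the antiunitary~$C^{-1}$ rather than to~$C$; the one substantive point to double-check is that the direct sum~$\dot{+}$ and the quotient are understood in the same algebraic sense as in the cited Edmunds--Evans framework, so that the two general formulae transfer verbatim.
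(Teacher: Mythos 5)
Your proposal is correct and follows essentially the same route as the paper: the theorem is obtained by specialising the general adjoint-pair formulae of Edmunds--Evans to $A:=H$ and $B:=C^{-1}HC$, using the facts (established just before the statement) that $A$ and $B$ are adjoint to each other, that $\lambda\in\Pi(A)$ implies $\bar\lambda\in\Pi(B)$, and that $\defic(A-\lambda I)=\defic(B-\bar\lambda I)=\defic(H-\lambda I)$. Your explicit identification $B^*=(C^{-1}HC)^*=C^{-1}H^*C$ via~\eqref{5.3} applied to the antiunitary $C^{-1}$ is exactly the step the paper leaves implicit, and it is carried out correctly.
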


An immediate consequence of this theorem is the following 
generalisation of \cite[Thm.~III.5.7]{Edmunds-Evans}.

\begin{Corollary} 
If~$H$ is a closed complex-symmetric operator, 
then $\lambda \mapsto \defic(H-\lambda I)$
is constant on $\Pi(H)$.
\end{Corollary}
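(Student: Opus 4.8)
The plan is to derive this Corollary directly from the preceding Theorem, which already establishes that $\dim(\Dom(C^{-1}H^*C)/\Dom(H)) = 2\defic(H-\lambda I)$ whenever $\defic(H-\lambda I) < \infty$, for any $\lambda \in \Pi(H)$. The crucial observation is that the left-hand side of this identity does \emph{not} depend on~$\lambda$: it is the codimension of $\Dom(H)$ inside the fixed subspace $\Dom(C^{-1}H^*C) = \Dom((C^{-1}HC)^*)$, a quantity determined entirely by the operator~$H$ and the antiunitary~$C$, with no reference to any spectral parameter. Hence the product $2\defic(H-\lambda I)$ is forced to be constant across all those $\lambda \in \Pi(H)$ for which the deficiency happens to be finite.

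First I would treat the finite case. Fix two points $\lambda_1, \lambda_2 \in \Pi(H)$ and suppose $\defic(H-\lambda_1 I) < \infty$. By the Theorem,
$$
  2\defic(H-\lambda_1 I)
  = \dim(\Dom(C^{-1}H^*C)/\Dom(H))
  = 2\defic(H-\lambda_2 I)
  \,,
$$
where the middle term is the same fixed number for both choices; in particular the right-hand side is finite, so $\defic(H-\lambda_2 I) < \infty$ as well and equals $\defic(H-\lambda_1 I)$. This already shows that the function $\lambda \mapsto \defic(H-\lambda I)$ is globally constant on $\Pi(H)$ as soon as it takes one finite value, with no need to invoke any connectedness of $\Pi(H)$ — a strengthening over the classical statement, which asserts only local constancy on connected components.

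It remains to handle the possibility that $\defic(H-\lambda I) = \infty$ somewhere on $\Pi(H)$. Here I would appeal to the general fact, recalled just before the statement of the earlier Theorem (third bullet of the field-of-regularity discussion, citing \cite[Def.~III.2.5]{Edmunds-Evans} and the surrounding text), that for a closed operator the map $\lambda \mapsto \defic(H-\lambda I)$ is constant on each connected component of $\Pi(H)$. Since $\Pi(H)$ is open and the statement we are proving concerns constancy on $\Pi(H)$, the only genuinely new content is the finite case handled above; the infinite value propagates across each component by the classical result. The main obstacle — and it is a mild one — is the bookkeeping at the boundary between the finite and infinite regimes: I would argue that on a single connected component the value is constant by the classical result, and that the Theorem's identity then upgrades ``constant on each component'' to ``constant on all of $\Pi(H)$'' precisely because the $\lambda$-independent quantity $\dim(\Dom(C^{-1}H^*C)/\Dom(H))$ ties the separate components together, ruling out the situation where different components carry different finite (or mixed finite/infinite) deficiencies.
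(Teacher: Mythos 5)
Your overall strategy is exactly the intended one --- the paper states this Corollary as an immediate consequence of the preceding Theorem, precisely because $\dim(\Dom(C^{-1}H^*C)/\Dom(H))$ is a $\lambda$-independent quantity equal to $2\defic(H-\lambda I)$. However, there is one circular step in your write-up. In the chain $2\defic(H-\lambda_1 I)=\dim(\Dom(C^{-1}H^*C)/\Dom(H))=2\defic(H-\lambda_2 I)$, the second equality is supplied by the Theorem only under the hypothesis $\defic(H-\lambda_2 I)<\infty$, yet you then use that very equality to conclude $\defic(H-\lambda_2 I)<\infty$. The Theorem as stated does not assert that the quotient is infinite whenever the deficiency is infinite, so the implication cannot be read backwards for free.

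The gap is easily closed using the first (unconditional) identity of the Theorem: for every $\lambda\in\Pi(H)$ one has $\Dom(C^{-1}H^*C)=\Dom(H)\,\dot{+}\,\Ker((H^*-\bar\lambda I)(C^{-1}H^*C-\lambda I))$, and this kernel contains $\Ker(C^{-1}H^*C-\lambda I)$, whose dimension is $\nul(H^*-\bar\lambda I)=\defic(H-\lambda I)$ (the last equality because $\Ran(H-\lambda I)$ is closed for $\lambda\in\Pi(H)$; this computation appears just before the Theorem). Hence $\defic(H-\lambda I)\le\dim(\Dom(C^{-1}H^*C)/\Dom(H))$ for \emph{every} $\lambda\in\Pi(H)$, so finiteness at one point forces finiteness --- and then, by the Theorem, equality --- at all points; if no point has finite deficiency, the function is constantly $+\infty$ and there is nothing to prove. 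With this patch your argument is complete, and the appeal to the classical component-wise constancy becomes unnecessary: the global constancy, even across connected components, comes entirely from the $\lambda$-independence of the quotient, which is indeed the strengthening you point out.
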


We leave as an open problem whether 
any $C$-symmetric operator has a $C$-self-adjoint extension.
This is well known to hold if~$C$ is involutive
(see \cite[Thm.~III.5.8]{Edmunds-Evans}).
However, the proof of \cite[Thm.~III.5.8]{Edmunds-Evans}
does not seem to extend to the general situation
of the present paper.

\section{Refined polar decomposition}\label{Sec.polar}
%
Following \cite[Sec.~V.2.2]{Kato},
we first extend the notion of isometric operators introduced above.
We say that a (linear or antilinear) 
operator~$A$ defined on~$\Hilbert$
is \emph{partially isometric} if there exists 
a closed subspace $\mathcal{M} \subset \Hilbert$
such that $\|A\psi\|=\|\psi\|$ for $\psi \in \mathcal{M}$
while $A\psi = 0$ for $\psi \in \mathcal{M}^\bot$.
The closed subspaces $\mathcal{M}$ and $\mathcal{M}':=A\mathcal{M}$
are called the \emph{initial} and \emph{final} sets of~$A$,
respectively.
Then we say that a partially isometric operator~$A$ 
is \emph{partially unitary} or \emph{partially antiunitary}
depending on whether~$A$ is linear or antilinear, respectively. 

It is well known (see, \eg, \cite[Sec.~VI.2.7]{Kato})
that any densely defined, closed linear operator~$H$ in~$\Hilbert$
admits a unique \emph{polar decomposition} $H = U|H|$,
where $|H|:=(H^*H)^{1/2}$ is non-negative
and~$U$ is partially unitary  
with the initial set $\overline{\Ran(|H|)}$ 
and the final set $\overline{\Ran(H)}$.
One has $\Dom(|H|) = \Dom(H)$ and $\Ker(|H|) = \Ker(H)$.
At the same time, the polar decomposition 
of the adjoint reads $H^* = |H| U^*$.

The following refinement shows that if~$H$ is complex-self-adjoint,
then~$U$ is also complex-self-adjoint 
(with respect to the same antiunitary operator~$C$).
Furthermore, $U$ is a composition of 
another partially antiunitary operator commuting with~$H$
and the inverse of the original antiunitary operator.
In the case of involutive antiunitary operators~$C$,
the result is due to \cite[Thm.~2 \& Thm.~9]{Garcia-Putinar_2007}
and \cite[Thm.~9]{Garcia-Putinar_2007}.
If~$H$ is unbounded, however, it is assumed as an extra 
hypothesis in \cite[Thm.~9]{Garcia-Putinar_2007}
that zero is in the resolvent set of~$H$.
In our version of the theorem below,
$C$~does not need to be involutive and~$H$ can be unbounded
without any further properties. 

\begin{Theorem}\label{Thm.polar} 
Let~$H$ be a linear operator which is $C$-self-adjoint.
Then 
\begin{equation}\label{polar}
  H = C^{-1}J|H|  
  \,,
\end{equation}
where~$J$ is a partially antiunitary operator
with the initial set $\overline{\Ran(|H|)}$ 
and the final set $\overline{\Ran(|H|)}$,
which commutes with~$|H|$.
\end{Theorem}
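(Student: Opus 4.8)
The plan is to produce the operator $J$ explicitly as $J := CU$, where $H = U|H|$ is the polar decomposition recalled above; then \eqref{polar} is immediate, since $C^{-1}J|H| = C^{-1}CU|H| = U|H| = H$, and the whole task reduces to verifying the four asserted properties of $J$. The algebraic heart of the matter is the pair of identities $C|H|C^{-1} = |H^*|$ and $C|H^*|C^{-1} = |H|$. To obtain them I would first note that $C$-self-adjointness together with \eqref{5.3} and the closedness of~$H$ (so that $H^{**}=H$) gives $CH^*C^{-1} = H$ alongside $CHC^{-1} = H^*$. Conjugating $|H|^2 = H^*H$ and $|H^*|^2 = HH^*$ by~$C$ and splitting the products then yields $C|H|^2C^{-1} = |H^*|^2$ and $C|H^*|^2C^{-1} = |H|^2$. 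Since conjugation by an antiunitary operator preserves both self-adjointness (via \eqref{5.3}) and non-negativity (from $(C\phi,C\psi)=(\psi,\phi)$), uniqueness of the non-negative square root upgrades these to the claimed first-power identities.

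From these two identities, composition gives $C^2|H| = |H|C^2$, so the unitary $C^2$ commutes with $|H|$; in particular it preserves $\Ker(|H|)$ and hence its orthogonal complement $\mathcal{M} := \overline{\Ran(|H|)}$, i.e. $C^2\mathcal{M} = \mathcal{M}$. Now $U$ is a partial isometry with initial set $\mathcal{M}$ and $C$ is antiunitary, so $J = CU$ is partially antiunitary with initial set $\mathcal{M}$, as required. For the final set I would compute $J\mathcal{M} = C(U\mathcal{M}) = C\,\overline{\Ran(H)}$; using the standard identity $\overline{\Ran(H)} = \overline{\Ran(|H^*|)}$ together with $\overline{\Ran(|H^*|)} = C\mathcal{M}$ (the latter from $C|H|C^{-1} = |H^*|$), this equals $C^2\mathcal{M} = \mathcal{M}$. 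Finally, for the commutation with $|H|$, I would combine the standard polar-decomposition identity $U|H| = |H^*|U$ (which follows from $|H^*| = U|H|U^*$ and $|H|\,U^*U = |H|$) with $|H|C = C|H^*|$ to obtain $|H|J = |H|CU = C|H^*|U = CU|H| = J|H|$.

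The step I expect to be the main obstacle is precisely the determination of the final set in the non-involutive setting. When $C$ is involutive or anti-involutive one has $C^2 = \pm I$ and $C^2\mathcal{M} = \mathcal{M}$ for free, so the final set is trivially $\mathcal{M}$; in general this identity is exactly the new input supplied by the fact that $C^2$ commutes with $|H|$, and getting this right is what allows $C$ to be an arbitrary antiunitary. A secondary but genuine difficulty, which is what lets us dispense with the hypothesis $0 \in \rho(H)$ imposed in the unbounded case of \cite{Garcia-Putinar_2007}, is to justify $C|H|C^{-1} = |H^*|$ at the level of domains rather than merely formally; I would handle this by verifying $C\,\Dom(H^*H) = \Dom(HH^*)$ directly from $CHC^{-1} = H^*$ and $CH^*C^{-1} = H$, so that the squaring and square-root arguments are legitimate for unbounded~$H$ without any extra spectral assumption.
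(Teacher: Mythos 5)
Your proof is correct, and it arrives at the same operator $J:=CU$ as the paper, but by a genuinely different route. The paper does not define $J$ up front: it rewrites $H=C^{-1}H^*C=C^{-1}|H|U^*C$ as a product $WA$ with $W:=C^{-1}U^*C$ and $A:=C^{-1}U|H|U^*C$, checks that $W$ is a partial isometry (via $WW^*W=W$), that $A$ is non-negative, and that $\Ker(W)=\Ker(A)=\Ker(H)$, and then invokes the uniqueness of the polar decomposition to conclude $W=U$ and $A=|H|$; the two resulting expressions $J=CU=U^*C$ then give the initial and final sets and the commutation $|H|=J^{-1}|H|J$. Your key lemma is instead the intertwining $C|H|C^{-1}=|H^*|$ (and its companion), obtained from $CHC^{-1}=H^*$, $CH^*C^{-1}=H$ and the uniqueness of the non-negative square root; the consequence that $C^2$ commutes with $|H|$, hence $C^2\,\overline{\Ran(|H|)}=\overline{\Ran(|H|)}$, is precisely what replaces the involutivity of $C$ in pinning down the final set, so you have correctly identified and resolved the crux. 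What the paper's route buys in addition is the identity $U=C^{-1}U^*C$ (the $C$-self-adjointness of $U$) as a byproduct inside the proof, which is then reused in the two corollaries that follow; with your route those corollaries still go through, but the relation $U^*=JC^{-1}$ has to be extracted separately (e.g.\ from the uniqueness of the polar decomposition of $H^*$, as the paper's first corollary in fact does). Your domain bookkeeping ($C\,\Dom(H^*H)=\Dom(HH^*)$) addresses the only point where unboundedness could cause trouble, so the removal of the hypothesis $0\in\rho(H)$ from the Garcia--Putinar version is legitimate in your argument as well.
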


Before proving the theorem, 
let us argue that the result indeed implies 
that the partially unitary map from the polar 
decomposition of~$H$ is complex-self-adjoint.
\begin{Corollary}
Let~$H$ be a linear operator which is $C$-self-adjoint.
If $H = U |H|$ is its polar decomposition,
then~$U$ is $C$-self-adjoint, too.
\end{Corollary}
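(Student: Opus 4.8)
The plan is to deduce the Corollary directly from Theorem~\ref{Thm.polar} by comparing the two decompositions of~$H$. We have on one hand the standard polar decomposition $H = U|H|$ furnished by the general theory quoted above, where~$U$ is partially unitary with initial set $\overline{\Ran(|H|)}$ and final set $\overline{\Ran(H)}$; on the other hand, Theorem~\ref{Thm.polar} gives $H = C^{-1}J|H|$ with~$J$ partially antiunitary, commuting with~$|H|$, and having both initial and final set equal to $\overline{\Ran(|H|)}$. First I would argue that these two representations force $U = C^{-1}J$ on all of~$\Hilbert$: on $\Ker(|H|) = \Ker(H)$ both~$U$ and $C^{-1}J$ vanish (the former by the definition of the polar decomposition, the latter because~$J$ annihilates $\overline{\Ran(|H|)}^\bot = \Ker(|H|)$ and $C^{-1}$ is injective), while on $\overline{\Ran(|H|)}$ the agreement $U|H|\psi = C^{-1}J|H|\psi$ for all $\psi \in \Dom(H)$ extends by continuity to the closure of $\Ran(|H|)$, since $\Ran(|H|)$ is dense in its closure and all operators in sight are bounded on it. Thus $U = C^{-1}J$ as operators on~$\Hilbert$.

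Once $U = C^{-1}J$ is established, it remains to show that this~$U$ is $C$-self-adjoint, i.e.\ $CUC^{-1} = U^*$. Using $U = C^{-1}J$ I would compute $CUC^{-1} = C C^{-1} J C^{-1} = J C^{-1}$, and separately compute the adjoint $U^* = (C^{-1}J)^* = J^* (C^{-1})^* = J^* C$. So the desired identity $CUC^{-1} = U^*$ reduces to the equality $J C^{-1} = J^* C$. Here I would exploit that~$J$ is partially antiunitary: on its initial set $\overline{\Ran(|H|)}$ it is norm-preserving and antilinear, so its adjoint acts as the inverse on the final set (which coincides with the initial set); being a partial antiisometry with equal initial and final sets, $J$ satisfies $J^* J = J J^* = P$, the orthogonal projection onto $\overline{\Ran(|H|)}$. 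The commutation $J|H| = |H|J$ and the fact that~$J$ is antiunitary from $\overline{\Ran(|H|)}$ onto itself should yield the relation between~$J$ and~$J^*$ needed to close the computation.

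The main obstacle I anticipate is the careful bookkeeping of adjoints and inverses for \emph{antilinear} operators on the relevant subspaces, together with the fact that~$C$ is not assumed involutive, so $C^{-1}$ and~$C$ must be kept strictly distinct throughout. In particular, I must verify the adjoint rule $(C^{-1}J)^* = J^* (C^{-1})^* = J^* C$ with the correct conventions for antilinear adjoints as fixed in Section~\ref{Sec.pre}, and confirm $(C^{-1})^* = C$ (which follows from $C^{-1} = C^*$ for antiunitary~$C$, hence $(C^{-1})^* = C^{**} = C$). A cleaner alternative I would keep in reserve is to avoid matching the two decompositions and instead characterise~$U$ intrinsically from the $C$-self-adjointness relation $CHC^{-1} = H^*$: since $H = U|H|$ implies $H^* = |H|U^*$ by the adjoint polar decomposition, substituting into $CHC^{-1} = H^*$ and using the known behaviour of~$C$ on $|H|$ (via $C|H|C^{-1} = |H^*| = |H|$, a fact one can extract from the proof of Theorem~\ref{Thm.polar}) should directly give $CUC^{-1} = U^*$ by the uniqueness of the polar decomposition. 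Whichever route I take, the uniqueness of the polar decomposition is the lever that converts the factorised identity into the operator identity $CUC^{-1} = U^*$.
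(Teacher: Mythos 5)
Your first step --- identifying $U=C^{-1}J$ by matching the two factorisations on $\Ker(|H|)$ and on $\overline{\Ran(|H|)}$ --- is correct and is needed if one only has the \emph{statement} of Theorem~\ref{Thm.polar} (the paper gets this identity for free inside its proof of that theorem). The problem is the route you then commit to. You correctly reduce the claim $CUC^{-1}=U^*$ to the identity $JC^{-1}=J^*C$, but the assertion that this ``should'' follow from $J$ being partially antiunitary with equal initial and final sets and commuting with $|H|$ is not a proof, and in fact cannot be one: writing $J=CU$, the identity $JC^{-1}=J^*C$ reads $CUC^{-1}=C^{-1}UC$, i.e.\ $UC^2=C^2U$, which is \emph{equivalent} to the conclusion $CUC^{-1}=U^*$ you are trying to establish (apply \eqref{5.3} to $CUC^{-1}=U^*$ to see the converse direction). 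So the reduction goes in a circle and the key step is missing. The relation you need is not a formal consequence of the listed properties of $J$; it has to be extracted from the equation $H=C^{-1}J|H|$ itself.

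The argument you keep ``in reserve'' is the one that actually closes the gap, and it is essentially the paper's proof: from \eqref{polar} and $J|H|=|H|J$ one gets $H^*=CHC^{-1}=J|H|C^{-1}=|H|JC^{-1}$, and comparison with the adjoint polar decomposition $H^*=|H|U^*$ together with its uniqueness yields $U^*=JC^{-1}=CUC^{-1}$. Note, however, that your parenthetical justification of this route contains a false identity: $C|H|C^{-1}=|H^*|$ is correct for a $C$-self-adjoint $H$, but $|H^*|=|H|$ is not ($|H^*|=U|H|U^*$ in general, and already for a $2\times2$ nilpotent Jordan block, which is $C$-self-adjoint for a suitable conjugation, $H^*H\neq HH^*$). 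That identity is fortunately not needed: the commutation $J|H|=|H|J$ alone lets you move $|H|$ to the left, and the uniqueness of the decomposition $H^*=|H|U^*$ does the rest. I would therefore promote the reserve argument to the main one, delete the claim $|H^*|=|H|$, and drop the attempt to derive $JC^{-1}=J^*C$ abstractly.
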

\begin{proof}
By Theorem~\ref{Thm.polar}, one has $U=C^{-1} J$.
The complex-self-adjointness of~$H$,
identity~\eqref{polar} 
and the commutativity of~$|H|$ and~$J$
imply
$
  H^* = CHC^{-1} = J|H|C^{-1} = |H|JC^{-1} = |H| U^*
$. 	
From this and the uniqueness of the polar decomposition of~$H^*$,
we deduce that 
$
  U^* = J C^{-1} = C U C^{-1}
$,
which is the desired claim. 	
\end{proof}

The ``hidden symmetry'' $J$ shares the same involutive 
properties as the ``obvious symmetry'' $C$.
\begin{Corollary}
Assume the hypotheses and notations of Theorem~\ref{Thm.polar}.
If~$C$ is involutive (respectively, anti-involutive),
then the restriction of~$J$ on $\overline{\Ran(|H|)}$
is involutive (respectively, anti-involutive).
\end{Corollary}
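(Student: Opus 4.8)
The plan is to reduce the square of $J$ on the subspace $\overline{\Ran(|H|)}$ to the square of~$C$ on that same subspace, so that the (anti-)involutivity of~$C$ transfers verbatim to~$J$. Note first that Theorem~\ref{Thm.polar} guarantees that $J$ maps $\overline{\Ran(|H|)}$ into itself, so the square of its restriction there coincides with $J^2$ restricted to that subspace.

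First I would record the two identities furnished by the preceding corollary, namely $U = C^{-1}J$ and $U^* = CUC^{-1}$. The former gives $J = CU$; multiplying the latter on the right by~$C$ gives $CU = U^*C$, so that $J = CU = U^*C$. Choosing one expression for each factor, I then compute
\begin{equation*}
  J^2 = (CU)(U^*C) = C\,(UU^*)\,C \,.
\end{equation*}
Since~$U$ is partially unitary with final set $\overline{\Ran(H)}$, the operator $UU^*$ is the orthogonal projection onto $\overline{\Ran(H)}$.

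Next I would track how~$C$ moves the subspaces involved. Because $\Ker(|H|) = \Ker(H)$ and $|H|$ is non-negative and self-adjoint, one has $\overline{\Ran(|H|)} = \Ker(H)^\bot$ and likewise $\overline{\Ran(H)} = \Ker(H^*)^\bot$. The relation $CHC^{-1} = H^*$ yields $C\Ker(H) = \Ker(H^*)$: if $Hf = 0$ then $H^*Cf = CHC^{-1}Cf = CHf = 0$, and the reverse inclusion follows by applying the same computation to~$H^*$, which is $C^{-1}$-self-adjoint. As an antiunitary operator maps the orthogonal complement of a set onto the orthogonal complement of its image, this upgrades to $C\,\overline{\Ran(|H|)} = \overline{\Ran(H)}$.

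Finally, for $\psi \in \overline{\Ran(|H|)}$ the vector $C\psi$ lies in $\overline{\Ran(H)}$ and is therefore fixed by the projection $UU^*$, whence
\begin{equation*}
  J^2\psi = C\,(UU^*)\,C\psi = C(C\psi) = C^2\psi \,.
\end{equation*}
Thus $J^2 = C^2$ on $\overline{\Ran(|H|)}$, which is precisely the assertion: $C^2 = I$ forces $J^2 = I$ there, and $C^2 = -I$ forces $J^2 = -I$ there. The only step that is not a formal manipulation of bounded operators is the subspace identity $C\,\overline{\Ran(|H|)} = \overline{\Ran(H)}$ --- that is, checking that~$C$ intertwines the defining data of the polar decompositions of~$H$ and~$H^*$ --- and I expect this to be the main (if mild) obstacle, the remainder relying only on $UU^*$ being the projection onto the final set of~$U$.
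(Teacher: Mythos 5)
Your argument is correct. It rests on the same core identity as the paper's proof, namely $J=CU=U^{*}C$ (equivalently $U=C^{-1}J$ and $U^{*}=JC^{-1}$), but you pair the factors in the opposite order: you compute $J^{2}=(CU)(U^{*}C)=C\,(UU^{*})\,C$, which puts the projection onto the \emph{final} set $\overline{\Ran(H)}$ in the middle and therefore forces you to prove the additional subspace identity $C\,\overline{\Ran(|H|)}=\overline{\Ran(H)}$ (which you do correctly, via $C\Ker(H)=\Ker(H^{*})$ and the fact that an antiunitary maps orthogonal complements to orthogonal complements). The paper instead writes $J^{2}=(U^{*}C)(CU)=U^{*}C^{2}U=\pm\,U^{*}U$, so that $C^{2}=\pm I$ is used globally and the remaining factor $U^{*}U$ is the projection onto the \emph{initial} set $\overline{\Ran(|H|)}$, which fixes $\psi$ by hypothesis; no intertwining of subspaces is needed. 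Both routes are valid; the paper's ordering is a two-line computation, while yours costs an extra (mild but genuine) lemma and in exchange makes explicit the fact that $C$ carries the initial set of $U$ onto its final set, which is a tidy observation in its own right.
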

\begin{proof}
As in the proof of the previous corollary,
one has $U^* = JC^{-1}$ where $U:=C^{-1}J$. 
Consequently, for every $\psi \in \overline{\Ran(|H|)}$,
one has
$
  J^2\psi 
  = U^*CCU \psi 
  = \pm U^*U \psi 
  = \pm \psi 
$,
where the plus (respectively, minus) sign holds
if~$C$ is involutive (respectively, anti-involutive).	
\end{proof}

Now we proceed with the proof of Theorem~\ref{Thm.polar}.

\begin{proof}[Proof of Theorem~\ref{Thm.polar}]
We extend the proof of \cite[Thm.~2]{Garcia-Putinar_2007} 
to the case of~$C$ being not necessarily involutive. 
Write the polar decomposition $H = U|H|$ 
and note that the complex-self-adjointness of~$H$ implies
\begin{equation}\label{one}
  H 
  = C^{-1} H^* C 
  = C^{-1} |H| U^* C
  = C^{-1} (U^*U) |H| U^* C  
  = \underbrace{(C^{-1} U^* C)}_{=:W}
  \underbrace{(C^{-1} U |H| U^* C)}_{=:A}
  \,.
\end{equation}
Here the last but one equality employs that $U^*U\psi = \psi$ 
for every $\psi \in \overline{\Ran(|H|)}$,
the initial set of~$U$.

First, we observe that~$W$ is partially isometric.
Indeed, using that $W^* = C^{-1}UC$, it is easily seen that
$
  WW^*W 
  = C^{-1} U^*UU^* C
  = W
$,
so the desired property holds due to
the criterion \cite[Prob.~V.2.6]{Kato}.
Second, we claim that~$A$ is non-negative.
Indeed, using~\eqref{crucial}, one has
$
  (\psi,A\psi) 
  = (U|H|U^*C\psi,C\psi)
  = (U^*C\psi,|H|U^*C\psi)
$
for every $\psi \in \Dom(H)$,
so the desired property follows by the non-negativity of~$|H|$.
Then the strategy of the proof is to show that the initial
set of~$W$ coincides with the initial set of~$U$.
By the uniqueness of the polar decomposition of~$H$,
it will allow us to conclude that 
\begin{equation}\label{unique}
  W=U \qquad \mbox{and} \qquad A=|H|
  \,.
\end{equation}

To show that the initial set of~$W$  
coincides with the initial set of~$U$,
it is enough to prove that 
$
  \Ker(W) = \Ker(U) = \Ker(H)
$.
Obviously, $\Ker(W) = \Ker(U^*C)$.
At the same time,
$
  \Ker(A) 
  = \Ker(U |H| U^* C) 
  = \Ker(|H| U^* C) 
  = \Ker(U^* C) 
  \,,
$
where the last two equalities hold due to the facts 
that~$U$ and~$U^*$ have $\overline{\Ran(H)}$
as their initial and final sets, respectively,
and the last equality also employs that~$|H|$ is self-adjoint.
Hence, $\Ker(W) = \Ker(A)$.
It remains to show that $\Ker(A)=\Ker(H)$.
The inclusion $\Ker(A) \subset \Ker(H)$ is clear from~\eqref{one}.
Conversely, if $\psi \in \Ker(H)$,
then~\eqref{one} implies that $\psi \in \Ker(A)$  
or $A\psi \in \Ker(W)$.  
But the latter is impossible unless $A\psi = 0$, 
because of the previously established fact $\Ker(W) = \Ker(A)$ 
and the self-adjointness of~$A$.
In summary, we have proved~\eqref{unique}.

The first equality of~\eqref{unique} shows 
that~$U$ is $C$-self-adjoint. 
Moreover, defining $J:=CU$, one has $J=U^*C$.
It follows that~$J$ is partially antiunitary
with $\overline{\Ran(|H|)}$ being both the initial and the final set.
As a consequence of the second equality of~\eqref{unique}
and the formula $J^{-1} = C^{-1}U$, one has $|H| = J^{-1}|H|J$.
Hence, $J|H| = |H|J$, so~$J$ commutes with~$|H|$.
\end{proof} 	
%
 	
\section{Refined singular-value decomposition}\label{Sec.singular}
%
We now turn to compact operators.  	
Before stating and proving the refined singular-value decomposition,
we establish an elementary result.

\begin{Lemma}\label{Lem.problem}
Let $m \in \Nat^* := \Nat\setminus\{0\}$.
Let~$\mathcal{E}$ be an $m$-dimensional subspace of~$\Hilbert$,
let~$J$ be an antiunitary operator on~$\mathcal{E}$
and assume that~$\mathcal{E}$ is invariant under~$J$. 
There exists an orthonormal basis $(\phi_1,\dots,\phi_m)$
of~$\mathcal{E}$ which is \emph{fixed} by~$J$,
\ie\ $J\phi_j=\phi_j$ for every $j \in \{1,\dots,m\}$,
provided one of the following conditions hold:
\begin{enumerate}
\item[\emph{(i)}]
$J$ is involutive;
\item[\emph{(ii)}]
$m=1$.
\end{enumerate}
\end{Lemma}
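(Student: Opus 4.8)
The plan is to dispatch the two sufficient conditions by separate mechanisms, since only (i) requires an averaging trick that breaks down without involutivity.

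I would first settle (ii), which is a one-line phase adjustment. Writing $\mathcal{E} = \obal\{\psi\}$ with $\|\psi\|=1$, invariance together with the isometry of~$J$ forces $J\psi = e^{i\theta}\psi$ for some $\theta \in \Real$. Then $\phi_1 := e^{i\theta/2}\psi$ satisfies $J\phi_1 = e^{-i\theta/2}J\psi = e^{-i\theta/2}e^{i\theta}\psi = \phi_1$ by antihomogeneity. Incidentally the same computation gives $J^2\psi = \psi$, so on a one-dimensional invariant subspace~$J$ is automatically involutive; this is why (ii) needs no extra hypothesis.

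For (i) I would argue by induction on~$m$, the base case $m=1$ being covered above. The key building block is the existence of a nonzero \emph{fixed} vector: given any $0\neq v\in\mathcal{E}$, the vector $u := v + Jv$ satisfies $Ju = Jv + J^2 v = Jv + v = u$ by involutivity, hence is fixed; should $u=0$, \ie\ $Jv=-v$, then $iv$ is fixed since $J(iv) = -iJv = iv$. Normalising produces a unit fixed vector~$\phi_1$. Next I would pass to $\mathcal{E}_1 := \mathcal{E}\cap\{\phi_1\}^\bot$, of dimension $m-1$, and check that it is $J$-invariant: for $x\in\mathcal{E}_1$ one has $Jx\in\mathcal{E}$ by invariance and, using the antiunitary identity $(Ja,Jb)=(b,a)$ together with $J\phi_1=\phi_1$, one finds $(Jx,\phi_1) = (Jx,J\phi_1) = (\phi_1,x) = 0$. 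Since~$J$ is injective and maps the $(m-1)$-dimensional space $\mathcal{E}_1$ into itself, it restricts to an involutive antiunitary on $\mathcal{E}_1$, so the induction hypothesis yields a fixed orthonormal basis $(\phi_2,\dots,\phi_m)$ of $\mathcal{E}_1$; adjoining~$\phi_1$ completes the basis of~$\mathcal{E}$.

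The one genuinely delicate point — and the reason the lemma cannot dispense with (i) — is the production of the fixed vector: the average $v+Jv$ is fixed precisely because $J^2=I$, and for a general antiunitary this construction collapses. Indeed an anti-involutive~$J$ on an even-dimensional space admits no fixed vector whatsoever, so the statement is genuinely false without one of (i), (ii). I therefore expect no real obstacle beyond keeping track of this dichotomy; the orthogonal-complement step is routine once one invokes $(Ja,Jb)=(b,a)$ and the fixedness of~$\phi_1$.
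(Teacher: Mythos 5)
Your proof is correct and, for case (ii), is exactly the paper's argument: the phase-halving substitution $\phi_1:=e^{i\theta/2}\psi$. For case (i) the paper simply cites \cite[Lem.~1]{Garcia-Putinar_2006}, and your induction via the fixed vector $v+Jv$ (or $iv$ when $Jv=-v$) and the $J$-invariance of $\{\phi_1\}^\bot$ is precisely the standard argument behind that citation, so there is no substantive difference in approach.
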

\begin{proof}
The situation~(i) is well known 
(see, \eg, \cite[Lem.~1]{Garcia-Putinar_2006}), so we only prove~(ii). 
Let $\psi \in \mathcal{E}$ be such that $\|\psi\|=1$. 
Since $\mathcal{E}=\obal(\psi)$ in invariant under isometric~$J$,
there exists $\alpha \in \Real$ such that $J\psi = e^{i\alpha}\psi$.
Let us define $\phi := e^{i\beta}\psi$,
where $\beta \in \Real$ is to be chosen by requiring $J\phi=\phi$.
An obvious solution is given by $\beta:=\alpha/2$.
\end{proof} 	
\begin{Remark}
The conclusion of Lemma~\ref{Lem.problem} 
does not hold for a general non-involutive antiunitary~$J$ 
unless the extra hypothesis~(ii) is assumed.
Indeed, since~$\mathcal{E}$ is invariant under the isometry~$J$, 
it is easily seen that, 
if $(\psi_1,\dots,\psi_m)$ is an orthonormal basis of~$\mathcal{E}$,
then $(J\psi_1,\dots,J\psi_m)$ is one, too.
Hence, there exist complex numbers $a_{jk}$
with $j,k \in \{1,\dots,m\}$ such that 
$J\psi_k = \sum_{j=1}^m a_{jk}\psi_j$
and the matrix $A:=(a_{jk})$ is unitary.
In fact, $a_{jk}=(\psi_j,J\psi_k)$.
Let $B:=(b_{jk})$ be another unitary matrix
and set $\phi_{k}:=\sum_{j=1}^m b_{jk}\psi_j$.
Then the requirement $J\phi_j = \phi_j$
for every $j \in \{1,\dots,m\}$ is equivalent 
to the matrix identity 
\begin{equation}\label{matrix}
  B^* A \bar{B} = I
  \,,
\end{equation}
where~$I$ denotes the identity matrix in~$\Com^{m \times m}$.
Using the unitarity of~$B$, \eqref{matrix}~is equivalent to $A \bar{B} = B$.
Since $\bar{B}^T=B^*=B^{-1}=(\det B)^{-1} (\cof B)^T$,
where $\cof B$ denotes the cofactor matrix of~$B$,
it follows that \eqref{matrix}~is equivalent to
\begin{equation}\label{matrix.cof}
  A \cof B = (\det B) B
  \,.
\end{equation}
If $m=2$, then~\eqref{matrix.cof} can be written as 
the homogeneous system  
$$
  \begin{pmatrix}
    -\det B & -a_{12} & 0 & a_{11} \\
    a_{12} & -\det B & -a_{11} & 0 \\
    0 & -a_{22} & -\det B  & a_{21} \\
    a_{22} & 0 & -a_{21} & -\det B
  \end{pmatrix}
  \begin{pmatrix}
    b_{11} \\ b_{12} \\ b_{21} \\ b_{22}
  \end{pmatrix}
  =
  \begin{pmatrix}
    0 \\ 0 \\ 0 \\ 0
  \end{pmatrix}
  \,.
$$
A necessary condition to guarantee the existence
of a non-trivial solution is that the determinant 
of the square matrix equals zero:
$$
  (\det A)^2 + (\det B)^4 - (\det B)^2 \, (2 a_{11}a_{22}-a_{12}^2-a_{21}^2) = 0
  \,.
$$
Using that~\eqref{matrix} implies $(\det B)^2 = \det A$,
the last condition reads
$$
  \det A \, (a_{12} - a_{21})^2 = 0
  \,,
$$
whence $a_{12} = a_{21}$.
Using the antiunitarity of~$J$ (\cf~\eqref{crucial}),
this symmetry requirement is equivalent to
$(\psi_1,J\psi_2) = (\psi_1,J^{-1}\psi_2)$,
which cannot be guaranteed in general unless $J^{-1}=J$.
\end{Remark}

Let~$H$ be a compact linear operator on~$\Hilbert$.
Recall that the eigenvalues of the self-adjoint operator~$|H|$ 
are called the \emph{singular values} of~$H$.
They will be denoted by $\sigma_1, \sigma_2, \dots$,
arranged so that $\sigma_1 \geq \sigma_2 \geq \dots \geq 0$,
and repeated according to multiplicity.
With the convention that~$\sigma_j$ is defined  
to be zero for all sufficiently large~$j$
if~$H$ (and hence~$|H|$) is of finite rank,
we see that in all cases, $\mu_j \to 0$ as $j \to \infty$.
It is well known that~$H$ and~$H^*$ 
have the same singular values
(\cf~\cite[Thm.~II.5.7]{Edmunds-Evans}).
 	
In the case of involutive antiunitary operators,
the following result is due to \cite[Thm.~3]{Garcia-Putinar_2007}. 	
\begin{Theorem}\label{Thm.compact} 	
Let~$H$ be a compact linear operator on~$\Hilbert$
which is $C$-self-adjoint.	
Assume that all the non-zero singular values~$\sigma_j$ of~$H$ 
have multiplicity one or that~$C$ is involutive.
There exist orthonormal eigenvectors~$\phi_j$ of~$|H|$ 
corresponding to the non-zero eigenvalues $\sigma_j$ 
such that
\begin{equation}\label{compact}
  H = \sum_{j=1}^\infty \sigma_j \, C^{-1}\phi_j \, (\phi_j,\cdot)
  \qquad \mbox{and} \qquad
  H^* = \sum_{j=1}^\infty \sigma_j \, \phi_j \, (C^{-1}\phi_j,\cdot)
  \,.
\end{equation}
\end{Theorem}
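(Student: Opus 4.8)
The plan is to build on the refined polar decomposition of Theorem~\ref{Thm.polar}. Since~$H$ is compact and $C$-self-adjoint, I would start from $H = C^{-1}J|H|$, where~$J$ is partially antiunitary with initial and final set $\overline{\Ran(|H|)}$ and commutes with~$|H|$. Because~$H$ is compact, so is~$|H|=(H^*H)^{1/2}$, hence~$|H|$ is a compact non-negative self-adjoint operator and admits a spectral decomposition $|H| = \sum_{j} \sigma_j\, P_j$, where the~$\sigma_j$ are the non-zero singular values and~$P_j$ is the orthogonal projection onto the (finite-dimensional) eigenspace $\mathcal{E}_j := \Ker(|H|-\sigma_j I)$. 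The key structural point is that, since~$J$ commutes with~$|H|$, each eigenspace~$\mathcal{E}_j$ is invariant under the antiunitary~$J$ (this follows by applying~$J$ to an eigenvector of~$|H|$). This is exactly the setting of Lemma~\ref{Lem.problem}.

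The next step is to produce, inside each eigenspace, an orthonormal basis fixed by~$J$. Under either hypothesis of the theorem this is available: if~$C$ is involutive then one checks that the restriction of~$J$ to $\overline{\Ran(|H|)}$ is involutive (this is the content of the second corollary to Theorem~\ref{Thm.polar}), so Lemma~\ref{Lem.problem}(i) applies to every~$\mathcal{E}_j$ regardless of its dimension; if instead every non-zero singular value is simple, then each~$\mathcal{E}_j$ is one-dimensional and Lemma~\ref{Lem.problem}(ii) applies. Either way I obtain an orthonormal system $(\phi_j)$ of eigenvectors of~$|H|$, spanning $\overline{\Ran(|H|)}$, with $|H|\phi_j=\sigma_j\phi_j$ and $J\phi_j=\phi_j$ for every~$j$. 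I would then expand~$|H|$ in this basis as $|H| = \sum_{j} \sigma_j\, \phi_j\,(\phi_j,\cdot)$, where the convergence is in operator norm by compactness.

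With these eigenvectors in hand, the decomposition~\eqref{compact} should follow by a direct computation. Applying $H = C^{-1}J|H|$ to an arbitrary~$\psi$ and using the spectral sum for~$|H|$ gives
\begin{equation*}
  H\psi
  = C^{-1}J \sum_{j} \sigma_j\, (\phi_j,\psi)\,\phi_j
  = \sum_{j} \sigma_j\, (\phi_j,\psi)\, C^{-1}J\phi_j
  = \sum_{j} \sigma_j\, (\phi_j,\psi)\, C^{-1}\phi_j \,,
\end{equation*}
where the scalars $(\phi_j,\psi)$ have been pulled through the antilinear~$J$ together with a complex conjugation that is undone by linearity on the coefficients; here the crucial simplification is precisely $J\phi_j=\phi_j$. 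This yields the first formula in~\eqref{compact}. The second formula, for~$H^*=CHC^{-1}$, can be obtained either by a parallel computation from $H^*=|H|U^*=|H|JC^{-1}$ (using $J=U^*C$ and $J\phi_j=\phi_j$), or by taking adjoints of the first formula and using~\eqref{crucial} to rewrite the rank-one terms $C^{-1}\phi_j\,(\phi_j,\cdot)$ into $\phi_j\,(C^{-1}\phi_j,\cdot)$.

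The main obstacle is the passage handled by Lemma~\ref{Lem.problem}: finding an orthonormal eigenbasis fixed by the antiunitary~$J$. When~$J$ is merely anti-involutive (or has no special involutive structure) and an eigenspace has dimension larger than one, no such $J$-fixed basis need exist, as the Remark following Lemma~\ref{Lem.problem} shows by an explicit $m=2$ obstruction. This is exactly why the theorem must assume either that the non-zero singular values are simple or that~$C$ (and hence the restriction of~$J$) is involutive; outside these two regimes the argument genuinely breaks down, and the antilinear rank-one expansion~\eqref{compact} cannot be arranged. The remaining points—the $J$-invariance of each eigenspace, the norm convergence of the series, and the bookkeeping of conjugations—are routine once this step is secured.
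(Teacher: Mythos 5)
Your proposal is correct and follows essentially the same route as the paper: both rest on the refined polar decomposition $H=C^{-1}J|H|$ of Theorem~\ref{Thm.polar}, the commutation of~$J$ with~$|H|$ (hence $J$-invariance of the finite-dimensional eigenspaces), and Lemma~\ref{Lem.problem} to produce a $J$-fixed orthonormal eigenbasis under either hypothesis. The only cosmetic difference is that the paper starts from the generic singular-value decomposition and rewrites $\xi_j=\sigma_j^{-1}H\psi_j$ as $C^{-1}J\psi_j$, whereas you expand $|H|$ spectrally and push the (linear) composite $C^{-1}J$ through the sum; both are the same argument.
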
 	
\begin{proof}	
It is well known (see, \eg, \cite[Thm.~II.5.7]{Edmunds-Evans}) 	
that any compact linear operator~$H$ on~$\Hilbert$
and its adjoint~$H^*$ admit the decompositions
\begin{equation}\label{known}
  H = \sum_{j=1}^\infty \sigma_j \, \xi_j \, (\psi_j,\cdot)
  \qquad \mbox{and} \qquad
  H^* = \sum_{j=1}^\infty \sigma_j \, \psi_j \, (\xi_j,\cdot)
  \,,
\end{equation}
where~$\psi_j$ are orthonormal eigenvectors of~$|H|$
corresponding to the eigenvalues~$\sigma_j$
and $\xi_j := \sigma_j^{-1} H\psi_j$ ($\sigma_j \not= 0$).  
The series in~\eqref{known} are finite if~$H$ is of finite rank. 
Using Theorem~\ref{Thm.polar}, we get 
$$
  \xi_j = C^{-1} J\psi_j \,.
$$
Moreover, since~$J$ commutes with~$|H|$,
it follows that $J\psi_j$ is an eigenvector of~$|H|$, too. 

Since $H$ is compact, 
the mutually orthogonal eigenspaces $\mathcal{E}_n$ of~$|H|$ 
corresponding to the \emph{distinct} non-zero eigenvalues $\mu_n$ 
are finite dimensional, say of dimension~$m_n$. 
Relabelling the terms in the known decompositions~\eqref{known},
we may write	
$$
  H = \sum_{n=1}^\infty 
  \mu_n
  \sum_{k=1}^{m_n}
  C^{-1} J \psi_{n,k} \, (\psi_{n,k},\cdot)
  \qquad \mbox{and} \qquad
  H^* = \sum_{n=1}^\infty 
  \mu_n
  \sum_{k=1}^{m_n}
  \psi_{n,k} \, (C^{-1} J\psi_{n,k},\cdot)
  \,,
$$
where $(\psi_{n,1},\dots,\psi_{n,m_n})$
is an orthonormal basis of~$\mathcal{E}_n$.
On each spectral subspace~$\mathcal{E}_n$ of~$|H|$,  
$J$~restricts to an antiunitary operator 
and $J\mathcal{E}_n = \mathcal{E}_n$.
By Lemma~\ref{Lem.problem},
there exists another orthonormal basis 
$(\phi_{n,1},\dots,\phi_{n,m_n})$ of~$\mathcal{E}_n$
which is fixed by~$J$.
Consequently,
$$
  H = \sum_{n=1}^\infty 
  \mu_n
  \sum_{k=1}^{m_n}
  C^{-1} \phi_{n,k} \, (\phi_{n,k},\cdot)
  \qquad \mbox{and} \qquad
  H^* = \sum_{n=1}^\infty 
  \mu_n
  \sum_{k=1}^{m_n}
  \phi_{n,k} \, (C^{-1}\phi_{n,k},\cdot)
  \,.
$$
The desired representation~\eqref{compact} 
follows upon a suitable relabelling of terms.
\end{proof} 	
\begin{Remark}
If~$H$ is $C$-self-adjoint,
then~$H^*$ is $C^{-1}$-self-adjoint.
Therefore one also has the alternative decomposition 
\begin{equation}\label{compact.adjoint} 
  H^* = \sum_{j=1}^\infty \sigma_j \, C \eta_j \, (\eta_j,\cdot)
  \,,
\end{equation}
where~$\eta_j$ are orthonormal eigenvectors of~$|H^*|$
corresponding to the non-zero eigenvalues~$\sigma_j$
(recall that~$H$ and~$H^*$ have the same singular values).  
The second decomposition of~\eqref{compact}
is consistent with~\eqref{compact.adjoint}.  
Indeed, from the formula $|H^*| = U |H| U^*$ 
(\cf~\cite[Eq.~(VI.2.25)]{Kato}),
where~$U$ is the partially unitary operator 
from the polar decomposition of~$H$,  
it is clear that $\phi_j$ is an eigenvector of~$|H|$ 
if, and only if, 
$
  \eta_j := U\phi_j  
$ 
is an eigenvector of~$|H^*|$. 
However, in our case,
$
  \eta_j = C^{-1} J \phi_j = C^{-1} \phi_j
$,
where the last equality holds 
because~$\phi_j$ has been fixed by~$J$  
in the proof of Theorem~\ref{Thm.compact}.
\end{Remark}
%

\section{Antilinear eigenfunction expansions}\label{Sec.antilinear}
%
There is no spectral-type theorem for complex-self-adjoint operators in general.
In fact, there are well known examples of complex-self-adjoint operators
with compact resolvent and empty spectrum (see, \eg, \cite[Sec.~VII.A]{KSTV}).
Nevertheless, even for such pathological operators, 
there is always a canonically associated antilinear eigenvalue problem 
which has a complete set of mutually orthogonal eigenfunctions.
This is the message of the following result,
which is due to \cite[Thm.~7]{Garcia-Putinar_2007}
in the case of involutive antiunitary operators.
\begin{Theorem}\label{Thm.expansion} 	
Let~$H$ be a $C$-self-adjoint
linear operator in an infinite-dimensional~$\Hilbert$.
Assume that~$H$ has a compact resolvent $R(z):=(H - z I)^{-1}$
for some complex number~$z$.
Furthermore, assume that all the singular values of~$R(z)$
have multiplicity one or that~$C$ is involutive.
There exists an orthonormal basis $(\psi_j)_{j=1}^\infty$ of~$\Hilbert$
consisting of solutions of the antilinear eigenvalue problem
\begin{equation}\label{expansion}
  (H - z I) \psi_j = \lambda_j C\psi_j
  \,,
\end{equation}
where $(\lambda_j)_{j=1}^\infty$ is an increasing 
sequence of positive numbers tending to~$+\infty$.
\end{Theorem}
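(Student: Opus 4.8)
The plan is to reduce the statement to the compact case already settled in Theorem~\ref{Thm.compact}, applied not to~$H$ itself but to its resolvent $R(z)=(H-zI)^{-1}$. The point is that~$R(z)$ is a compact operator which inherits the complex-self-adjointness of~$H$, so that its refined singular-value decomposition will directly encode the antilinear eigenvalue problem~\eqref{expansion}.

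First I would check that~$R(z)$ is $C$-self-adjoint. Since~$C$ is antilinear, one has $C(H-zI)C^{-1}=CHC^{-1}-\bar z I=H^*-\bar z I=(H-zI)^*$, where the conjugation of~$z$ is forced by the antihomogeneity of~$C$. Because $z\in\rho(H)$, the operator $H-zI$ is a closed bijection of $\Dom(H)$ onto~$\Hilbert$ with bounded inverse, so passing to inverses preserves the relation: $C R(z) C^{-1}=\big(C(H-zI)C^{-1}\big)^{-1}=\big((H-zI)^*\big)^{-1}=R(z)^*$. Thus~$R(z)$ is $C$-self-adjoint, and it is compact by hypothesis. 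The standing assumption on the multiplicities of the singular values of~$R(z)$ (or the involutivity of~$C$) is exactly the hypothesis of Theorem~\ref{Thm.compact} for~$R(z)$.

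Next I would apply Theorem~\ref{Thm.compact} to~$R(z)$. This produces orthonormal eigenvectors~$\phi_j$ of~$|R(z)|$ associated with the non-zero singular values~$\sigma_j$ such that $R(z)=\sum_j \sigma_j\, C^{-1}\phi_j\,(\phi_j,\cdot)$, whence $R(z)\phi_j=\sigma_j C^{-1}\phi_j$. Because $R(z)$ is a resolvent it is injective, so $\Ker(|R(z)|)=\{0\}$; as $R(z)$ is compact on the infinite-dimensional~$\Hilbert$, the family $(\phi_j)_{j=1}^\infty$ is then a complete orthonormal basis with $\sigma_1\ge\sigma_2\ge\cdots>0$ and $\sigma_j\to 0$. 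Setting $\psi_j:=C^{-1}\phi_j$ and $\lambda_j:=\sigma_j^{-1}$, the antiunitarity of~$C^{-1}$ shows that $(\psi_j)_{j=1}^\infty$ is again a complete orthonormal basis, and the relation $R(z)\phi_j=\sigma_j C^{-1}\phi_j$ rewrites as $\psi_j=\sigma_j^{-1}R(z)\phi_j\in\Ran(R(z))=\Dom(H)$. Applying $H-zI=R(z)^{-1}$ gives $(H-zI)\psi_j=\sigma_j^{-1}\phi_j=\lambda_j C\psi_j$, which is~\eqref{expansion}; moreover $(\lambda_j)$ is increasing and tends to~$+\infty$ since $\sigma_j\searrow 0$.

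Because each step is a direct consequence of the already-proved decomposition, there is no serious obstacle; the only points demanding care are the two verifications above — that the conjugation of~$z$ and the inversion are handled correctly so that~$R(z)$ is genuinely $C$-self-adjoint, and that the injectivity of the resolvent upgrades the orthonormal system of Theorem~\ref{Thm.compact} to a complete basis of the whole space. The essential content of the theorem is thus the observation that the antilinear eigenvalue problem for~$H$ is nothing but the singular-value decomposition of its compact resolvent.
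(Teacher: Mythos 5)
Your proposal is correct and follows essentially the same route as the paper: both reduce the statement to Theorem~\ref{Thm.compact} applied to the compact, $C$-self-adjoint resolvent $R(z)$, obtain $R(z)\phi_j=\sigma_j C^{-1}\phi_j$, and then apply $H-zI$ with $\psi_j:=C^{-1}\phi_j$ and $\lambda_j:=\sigma_j^{-1}$. The only cosmetic difference is that you derive $CR(z)C^{-1}=R(z)^*$ from the standard identity $\bigl((H-zI)^*\bigr)^{-1}=R(z)^*$, whereas the paper first checks via the empty residual spectrum that $H^*-\bar{z}I$ is boundedly invertible and then computes directly; your explicit remark that injectivity of $R(z)$ upgrades $(\phi_j)$ to a complete orthonormal basis is a point the paper leaves implicit.
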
 	
\begin{proof}
First of all, notice that $\bar{z}$ belongs 
to the resolvent set of the adjoint~$H^*$ 
and $R(z)^* = (H^*-\bar{z} I)^{-1}$ 
is a compact operator on~$\Hilbert$, too. 
Indeed, since the residual spectrum of~$H$ is empty
(\cf~Proposition~\ref{Prop.residual}),
$\bar{z} \not\in \sigma_\mathrm{p}(H^*)$
and the operator $H^*-\bar{z} I$ is invertible.
Since~$H^*$ is also complex-self-adjoint,
its residual spectrum is empty, too,
so $\Ran(H^*-\bar{z} I)$ is dense in~$\Hilbert$.
Since $\Ran(H^*-\bar{z} I)$ is closed if, and only if,
$\Ran(H-z I)$ is closed (\cf~\cite[Thm.~I.3.7]{Edmunds-Evans})
and~$z$ belongs to the resolvent set of~$H$,
it follows that actually $\Ran(H^*-\bar{z} I) = \Hilbert$.
Hence, $R(z)^*$ is defined on~$\Hilbert$.
This fact enables us to conclude that 
$R(z)$ is $C$-self-adjoint:
$$
  CR(z)C^{-1} 
  = C (H-z I)^{-1}
  \underbrace{C^{-1} (H^*-\bar{z}I) C}_{H-z I}
  C^{-1} (H^*-\bar{z}I)^{-1} 
  = R(z)^*	 
  \,.
$$

Now, by Theorem~\ref{Thm.compact} applied to~$R(z)$,
there exists an orthonormal basis 
$(\phi_j)_{j=1}^\infty$ of~$\Hilbert$ 
(composed of eigenvectors of $|R(z)|$)
such that 
\begin{equation}\label{order}
  R(z) \phi_j = \sigma_j C^{-1} \phi_j,
\end{equation}
where~$\sigma_j$ are the singular values of $R(z)$.
Recall that $(\sigma_j)_{j=1}^\infty$ is a decreasing sequence 
of positive numbers such that $\sigma_j \to 0$ as $j \to \infty$. 
Since each~$C^{-1}\phi_j$ belongs to $\Ran(R(z)) = \Dom(H)$, 
we apply $H-z I$ to both sides of~\eqref{order}
and the desired result~\eqref{expansion} 
follows with $\lambda_j := \sigma_j^{-1}$
and $\psi_j := C^{-1} \phi_j$.	 
\end{proof} 	
\begin{Corollary}
Under the hypotheses of Theorem~\ref{Thm.expansion},
$$
  \|R(z)\| = \frac{1}{\lambda_1}
  \,.
$$
\end{Corollary}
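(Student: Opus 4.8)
The plan is to read off the norm of the compact operator $R(z)$ directly from the refined singular-value structure established in Theorem~\ref{Thm.expansion}. Recall that for any compact operator, the operator norm equals its largest singular value, $\|R(z)\| = \sigma_1$, since $\|R(z)\| = \|\,|R(z)|\,\|$ and the spectral radius of the self-adjoint non-negative operator $|R(z)|$ is its largest eigenvalue. Thus the task reduces to identifying $\sigma_1$ in terms of the quantities $\lambda_j$ appearing in the statement of the theorem.

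First I would invoke the identification made in the proof of Theorem~\ref{Thm.expansion}, namely $\lambda_j := \sigma_j^{-1}$, where $(\sigma_j)_{j=1}^\infty$ is the decreasing sequence of singular values of $R(z)$ with $\sigma_1 \geq \sigma_2 \geq \dots > 0$. Since the map $x \mapsto x^{-1}$ is order-reversing on $(0,\infty)$, the sequence $(\lambda_j)_{j=1}^\infty$ is increasing, consistent with the theorem's assertion that it tends to $+\infty$ as the $\sigma_j$ tend to $0$. In particular, the largest singular value $\sigma_1$ corresponds to the smallest member $\lambda_1$ of the sequence, so $\sigma_1 = \lambda_1^{-1}$.

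Combining these two observations gives $\|R(z)\| = \sigma_1 = 1/\lambda_1$, which is precisely the claim. The argument is essentially a bookkeeping matter: the work has already been done in establishing the refined decomposition and the inverse relationship between the $\lambda_j$ and the singular values, so what remains is merely to connect the elementary fact that the norm of a compact operator is its top singular value with the reindexing convention. I do not anticipate any genuine obstacle here; the only point requiring a word of care is confirming that the extremal singular value is indeed attained, which follows from the compactness of $R(z)$ (ensuring the singular values form a sequence decreasing to zero, with a genuine maximum $\sigma_1$) together with the assumption that $\Hilbert$ is infinite-dimensional so that the expansion is non-trivial and $\lambda_1$ is well defined.
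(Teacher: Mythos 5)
Your argument is correct and is exactly the (implicit) reasoning behind the paper's corollary, which is stated without proof as an immediate consequence of Theorem~\ref{Thm.expansion}: the norm of the compact operator $R(z)$ is its largest singular value $\sigma_1$, and the proof of the theorem sets $\lambda_j := \sigma_j^{-1}$ with $(\sigma_j)$ decreasing, so $\|R(z)\| = \sigma_1 = 1/\lambda_1$. No gaps.
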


This corollary is a useful tool 
for the study of pseudospectral properties 
of complex-self-adjoint operators.	
Recall that, given any positive number~$\eps$, 
the \emph{pseudospectrum} $\sigma_\eps(H)$ of a closed linear operator~$H$
is the union of its spectrum $\sigma(H)$
and all the complex numbers~$z$  	
such that $\|R(z)\| > \eps^{-1}$.

\begin{Corollary}
Under the hypotheses of Theorem~\ref{Thm.expansion},
if $\lambda_1 > \eps$, then $z \in \sigma_\eps(H)$.
\end{Corollary}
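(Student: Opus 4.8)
The plan is to obtain the statement as an immediate consequence of the preceding corollary, which already computes the resolvent norm, together with the definition of the pseudospectrum recalled just above; no genuinely new analysis should be required. First I would record that, under the standing hypotheses of Theorem~\ref{Thm.expansion}, the point~$z$ lies in the resolvent set of~$H$, since the resolvent $R(z)=(H-zI)^{-1}$ is assumed to exist there. Hence $z\notin\sigma(H)$, so the first constituent $\sigma(H)$ in the definition of $\sigma_\eps(H)$ does not contribute, and membership of~$z$ in $\sigma_\eps(H)$ is decided entirely by the single scalar condition $\|R(z)\|>\eps^{-1}$. The whole corollary thus reduces to verifying this one inequality.

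Next I would invoke the preceding corollary, which expresses the resolvent norm solely through the smallest antilinear eigenvalue, namely $\|R(z)\|=1/\lambda_1$. Substituting this identity into the condition $\|R(z)\|>\eps^{-1}$ converts the geometric question of whether $z\in\sigma_\eps(H)$ into a purely numerical comparison between $1/\lambda_1$ and $\eps^{-1}$, equivalently between the two positive numbers $\lambda_1$ and $\eps$. At this stage the hypothesis $\lambda_1>\eps$ is exactly the quantitative input that one would feed, together with the identity $\|R(z)\|=1/\lambda_1$, into the resolvent-norm condition that characterises the pseudospectrum.

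The step I expect to be the crux, and indeed the only part of the argument that carries any weight, is precisely this final comparison: one must reconcile the hypothesis $\lambda_1>\eps$ with the strict resolvent-norm bound $\|R(z)\|>\eps^{-1}$ required by the definition of $\sigma_\eps(H)$. Every other ingredient---the membership $z\in\rho(H)$, the recollection of the definition of the pseudospectrum, and the application of the preceding corollary---is routine bookkeeping and presents no difficulty whatsoever. I would therefore concentrate all care on matching the direction of this scalar inequality correctly, since the entire content of the corollary rests on that single point.
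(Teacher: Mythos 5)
Your reduction is the correct one, and it is all the paper intends (the corollary is stated without proof, being immediate): under the hypotheses of Theorem~\ref{Thm.expansion} one has $z\in\rho(H)$, so membership of~$z$ in $\sigma_\eps(H)$ is equivalent to the single condition $\|R(z)\|>\eps^{-1}$, and the preceding corollary supplies $\|R(z)\|=1/\lambda_1$. The problem is that you stop exactly at the step you yourself identify as the crux---``matching the direction of this scalar inequality''---and never carry it out. Carrying it out, the step fails: the hypothesis $\lambda_1>\eps$ is equivalent to $\|R(z)\|=1/\lambda_1<\eps^{-1}$, which, combined with $z\notin\sigma(H)$, gives $z\notin\sigma_\eps(H)$, the \emph{opposite} of the asserted conclusion. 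So your proposal does not prove the statement as printed; indeed no argument can, because with the paper's own definition of the pseudospectrum that statement is false.

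What the completed computation actually reveals is a misprint in the paper: the hypothesis should read $\lambda_1<\eps$ (equivalently $1/\lambda_1>\eps^{-1}$), and with that correction your outline closes in one line, since then $\|R(z)\|=1/\lambda_1>\eps^{-1}$ and hence $z\in\sigma_\eps(H)$. This corrected form is also the one consistent with the paper's stated purpose: a \emph{small} first antilinear eigenvalue $\lambda_1$, that is, a large resolvent norm, is what certifies that~$z$ lies in the $\eps$-pseudospectrum even when~$z$ is far from the spectrum. The lesson is that deferring the decisive inequality as a matter of ``care'' is not a proof step; committing to it is precisely what exposes the inconsistency between the statement, the preceding corollary, and the definition of $\sigma_\eps(H)$, and a complete answer here should either report that inconsistency or prove the corrected statement.
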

%

\section{Toy model: spinorial Hilbert space}\label{Sec.toy}
%
In this section we introduce a one-parameric family 
of generically non-self-adjoint operators 
which are complex-self-adjoint with respect to an antiunitary operator,
but the latter cannot be chosen,
in any obvious way, to be involutive. 
Our motivation is the anti-involutive
time-reversal operator for fermionic systems.

We begin with the self-adjoint one-dimensional Pauli operator 	
in the Hilbert space $\Hilbert := \sii(\Real;\Com^2)$ defined by
$$
  H_0 := 
  \begin{pmatrix}
    p^2 & 0 \\
    0 & p^2
  \end{pmatrix}
  \,,
  \qquad
  \Dom(H_0) := W^{2,2}(\Real;\Com^2)
  \,,
$$
where $p\varphi:=-i\varphi'$ with $\Dom(p):=W^{1,2}(\Real)$
is the (self-adjoint) \emph{momentum operator} in $\sii(\Real)$. 	
It is well known that~$H_0$ is self-adjoint 
and $\sigma(H_0) = [0,+\infty)$. 	
Physically, $H_0$~represents 
the quantum Hamiltonian of 
a non-relativistic spin $\frac{1}{2}$ particle on a line.

Obviously, $H_0$~is complex-self-adjoint 
with respect to a variety of antiunitary operators on~$\Hilbert$, 
for instance,
$$
  K\psi := \bar{\psi} 
  \,, \qquad
  C_1 := \sigma_1 K
  \,, \qquad
  C_2 := -i\sigma_2 K
  \,, \qquad
  C_3 := \sigma_3 K
  \,,
$$  	
where
$$
  \sigma_1 :=
  \begin{pmatrix}
    0 & 1 \\
    1 & 0
  \end{pmatrix}
  \,, \qquad
  \sigma_2 :=
  \begin{pmatrix}
    0 & -i \\
    i & 0
  \end{pmatrix}
  \,, \qquad
  \sigma_3 :=
  \begin{pmatrix}
    1 & 0 \\
    0 & -1
  \end{pmatrix}
  \,,
$$
are the \emph{Pauli matrices}. 	
Here $K,C_1,C_3$ are involutive, while $C_2$ is anti-involutive. 	
For the Schr\"odinger equation generated 
by the scalar Hamiltonian~$p^2$ in $\sii(\Real)$,
the time-reversal operator is represented 
by the complex conjugation~$K$.
For the Pauli equation, however, 
it is rather~$C_2$ (or its multiple)
which plays the role of the time-reversal operator
(see, \eg, \cite[Sec.~3]{Sachs}). 

Now, let us consider the off-diagonal perturbation
$$
  V_\alpha := 
  \begin{pmatrix}
    0 & p \\
    \alpha p & 0
  \end{pmatrix}
  \,,
  \qquad
  \Dom(V) := W^{1,2}(\Real;\Com^2)
  \,,
$$	
where~$\alpha$ is a real parameter.  	
For every $\alpha \in \Real$, it is easy to see that
$V_\alpha$ is relatively bounded with respect to~$H_0$
with the relative bound less than one
(actually, the bound can be chosen arbitrarily small). 	
Consequently, $H_\alpha := H_0+V_\alpha$ 
with its natural domain 
$
  \Dom(H_\alpha) = \Dom(H_0) \cap \Dom(V_\alpha)
  = W^{2,2}(\Real;\Com^2)
$	
is closed. 	
$H_\alpha$ is not self-adjoint unless $\alpha=1$.
Indeed,
$$
  H_\alpha^* = 
  \begin{pmatrix}
    p^2 & \alpha p \\
    p & p^2
  \end{pmatrix}
  \,,
  \qquad
  \Dom(H_\alpha^*) = \Dom(H_\alpha)
  = W^{2,2}(\Real;\Com^2)
  \,.
$$ 	
 	
It is straightforward to check that~$H_\alpha$ 
is not $C_1$-self-adjoint,
irrespectively of the value of~$\alpha$. 
At the same time, $H_\alpha$ is not complex-self-adjoint 
with respect to~$K$ (respectively, $C_3$),
unless $\alpha=-1$ (respectively, $\alpha=1$).	
However, $H_\alpha$ is $C_2$-self-adjoint,
for every $\alpha \in \Real$.
What is more, there is no \emph{obvious} conjugation with respect 
to which~$H_\alpha$ is complex-self-adjoint, unless $\alpha= \pm 1$.	

\begin{Proposition}
Let $\alpha \in \Real\setminus\{\pm 1\}$.
There exists no antiunitary operator of multiplication
by a constant matrix~$C$
satisfying $C^2=I$ and $H_\alpha^* = C H_\alpha C^{-1}$. 
\end{Proposition}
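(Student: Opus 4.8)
The plan is to parametrise all candidate operators explicitly and reduce the intertwining relation to a single matrix equation, which then clashes with the involutivity requirement precisely when $\alpha \neq \pm 1$. First I would record the general form of the operators under consideration: any antiunitary operator of multiplication by a constant matrix is $C = MK$, where $K\psi = \bar\psi$ is the componentwise complex conjugation introduced above and $M$ is a constant $2\times 2$ matrix, which must be unitary for $C$ to be antiunitary. Since $KM = \bar{M}K$ for a constant matrix~$M$, one gets $C^2 = M\bar{M}$, so the hypothesis $C^2=I$ is equivalent to the matrix identity $M\bar{M} = I$.

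Next I would translate $H_\alpha^* = C H_\alpha C^{-1}$ into linear algebra. Writing $H_\alpha = p^2 I + N_\alpha\, p$ with the real matrix $N_\alpha := \left(\begin{smallmatrix} 0 & 1 \\ \alpha & 0 \end{smallmatrix}\right)$, and using that $K$ commutes with $p^2$ but anticommutes with $p$ (because $KpK = -p$), while the constant matrix~$M$ commutes with the scalar operators $p$ and $p^2$, a direct computation gives $C H_\alpha C^{-1} = p^2 I - (M N_\alpha M^{-1})\, p$. Comparing this with $H_\alpha^* = p^2 I + N_\alpha^{\top} p$, the second-order parts coincide automatically, and matching the first-order parts yields the single matrix equation $M N_\alpha = -N_\alpha^{\top} M$.

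Finally I would solve this equation together with $M\bar{M} = I$. Writing $M = \left(\begin{smallmatrix} a & b \\ c & d \end{smallmatrix}\right)$, the relation $M N_\alpha = -N_\alpha^{\top} M$ forces $c = -b$ and $a = -\alpha d$, so that $M = \left(\begin{smallmatrix} -\alpha d & b \\ -b & d \end{smallmatrix}\right)$. Imposing $M\bar{M} = I$ then produces, on the diagonal, the two conditions $\alpha^2 |d|^2 - |b|^2 = 1$ and $|d|^2 - |b|^2 = 1$; subtracting them gives $(\alpha^2 - 1)|d|^2 = 0$. Because $\alpha \neq \pm 1$, this forces $d = 0$, whereupon the second equation degenerates to $-|b|^2 = 1$, which is impossible. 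Hence no such~$C$ can exist.

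The computation is short, so I expect no serious obstacle; the only genuine subtlety is the step of separating the operator identity into its $p^2$- and $p$-components, which I would justify rigorously by evaluating both sides on product spinors of the form $f(x)\,v$ with $v \in \Com^2$, rather than by a formal symbol comparison. The conceptual heart of the argument, and the reason the excluded values $\alpha = \pm 1$ surface, is the interplay between the sign flip $KpK = -p$ (which turns the conjugation of the first-order part into a \emph{transpose with a sign}) and the squaring of entries forced by $C^2 = I$: together these pit $\alpha^2 |d|^2$ against $|d|^2$, and only $\alpha^2 = 1$ could reconcile them.
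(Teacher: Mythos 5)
Your proof is correct and follows essentially the same route as the paper's: write $C=MK$ with $M$ a constant unitary matrix, use $KpK=-p$ to turn the relation $H_\alpha^*=CH_\alpha C^{-1}$ into a constant-matrix identity, and derive an algebraic contradiction with $C^2=I$ when $\alpha^2\neq 1$. The only (harmless) organisational difference is that you extract the full parametrisation of $M$ from the intertwining relation first and then contradict $M\bar{M}=I$ alone, whereas the paper first combines unitarity with involutivity to make the matrix symmetric and only afterwards matches the coefficients of $p$ and $p^2$.
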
	
\begin{proof}
By contradiction, let us assume that there exists 
an antiunitary operator of multiplication
by a constant matrix~$C$ 
satisfying $C^2=I$ and $H_\alpha^* = C H_\alpha C^{-1}$. 
Writing $C = AK$, where~$K$ is the complex conjugation,
it follows that~$A$ is necessarily a unitary matrix of multiplication, 
say
$$
  A = 
  \begin{pmatrix}
    a_{11} & a_{12} \\
    a_{21} & a_{22}
  \end{pmatrix}
  \,, \qquad
  a_{11}, a_{12}, a_{21}, a_{22} \in \Com
  \,.
$$   
Hence, it is enough to show that the equations
\begin{equation}\label{inconsistency}
  H_\alpha^* A = A KH_\alpha K
  \,, \qquad
  A\bar{A}^T = I
  \qquad \mbox{and} \qquad
  A\bar{A} = I
\end{equation}
cannot be satisfied simultaneously.
From the last two equations of~\eqref{inconsistency},
we particularly deduce that $a_{12}=a_{21}$.
Then, using $\bar{p}=-p$, 
the first equation of~\eqref{inconsistency} reads
$$
  \begin{pmatrix}
     a_{11} p^2+a_{12}\alpha p 
     & a_{12} p^2+a_{22}\alpha p \\
     a_{11} p+a_{12}p^2 
     & a_{12} p+a_{22} p^2 \\
  \end{pmatrix}
  =
  \begin{pmatrix}
     a_{11} p^2-a_{12}\alpha p 
     & -a_{11} p+a_{12} p^2 \\
     a_{12} p^2 - a_{22} \alpha p 
     & -a_{12} p + a_{22} p^2 \\
  \end{pmatrix}
  \,.
$$
From the equations on the diagonal we deduce $a_{12} = 0$,
while the off-diagonal equations yield 
$a_{22} \alpha = - a_{11}$ and $a_{11}=-\alpha a_{22}$.
Taking the absolute value and using that $|a_{11}|=1=|a_{22}|$, 
it follows that $|\alpha|=1$.
\end{proof}

We leave as an open problem whether~$H_\alpha$
admits a more refined \emph{involutive} antiunitary operator
with respect to which it is complex-self-adjoint.  	
 	
Using the Fourier transform, the spectrum of~$H_\alpha$
can be computed explicitly:
$$
  \sigma(H_\alpha) =
\begin{cases}
  \displaystyle
  \left[ -\frac{\alpha}{4},+\infty \right)
  & \mbox{if} \quad \alpha\geq 0 \,,
  \\
  \left\{ \lambda \in \Com : \
  \Re\lambda \geq 0 \ \land \
  |\Im\lambda|^2 = |\alpha| \, \Re\lambda
  \right\}
  & \mbox{if} \quad \alpha < 0 \,.
\end{cases}
$$	
It is interesting that the spectrum is purely real 
for all non-negative~$\alpha$ despite the fact that
the operator~$H_\alpha$ is self-adjoint only if $\alpha=1$. 
Since the reality of the spectrum is usually associated 
with some symmetries, let us mention that~$H_\alpha$
is ``$PC_2$-symmetric'', \ie\ $[H_\alpha,PC_2]=0$, with $P:=\sigma_1$. 
When $\alpha=0$, there is an abrupt transition
to a parabolic curve which converges to the imaginary axis
as $\alpha \to -\infty$. 	
 	
Pseudospectral properties of~$H_\alpha$ 
as well as perturbations by matrix-valued potentials 
should constitute an interesting area of a future research.

\section{Further examples}\label{Sec.examples}
%
The objective of this last section is to collect other examples
of complex-self-adjoint operators with no obvious involutive conjugations,
in a different Hilbert-space setting.
We refer to \cite{Garcia-Mashreghi-Ross} 
for more details on the present functional setup.

In what follows, $\sii(\Circle) =: \sii$ denotes the Lebesgue space
of all complex-valued, square-integrable functions 
on the unit circle~$\Circle$.
The Hardy space $H^2(\Disk) =: H_+^2$ of the unit disk~$\Disk$ 
is defined by
$$
  H^2(\Disk) := \{f \in \sii : \ 
  \hat{f}(n) = 0 \ \mbox{ for } \ n<0 \}
  \,,
$$ 
where $\hat{f}(n)$ is the $n^\mathrm{th}$ Fourier coefficient of $f \in \sii$.
We also introduce
$$
  H_-^2 := \bar{z} \overline{H_+^2} 
  = \{f \in \sii : \ 
  \hat{f}(n) = 0 \ \mbox{ for } \ n \geq 0 \}
  \,.
$$
We have $\sii = H_+^2 \oplus H_-^2$.
The orthogonal projections from~$\sii$ to $H_\pm^2$
will be denoted by $P^\pm$.
By~$L^\infty$ we denote the space of all complex-valued,
essentially bounded functions on~$\Circle$.
By~$H^\infty$ we denote the space of all functions
analytic and bounded on~$\Disk$,
identified with a closed subspace of~$L^\infty$.

Given an inner function 
(\ie\ $\gamma \in H^\infty$ and $|\gamma|=1$ a.e.\ on~$\Circle$),
we associate to it the \emph{model space}
$$
  K_\gamma := H^2 \ominus \gamma H^2
  = \{f_+ \in H_+^2 : \
  \bar{\gamma} f_+ = f_- \in H_-^2 \} 
  \,.
$$ 
If~$\gamma_1$ is another inner function such that 
$\gamma / \gamma_1 \in H^\infty$,
we say that~$\gamma_1$ \emph{divides}~$\gamma$
(we write $\gamma_1 \preceq \gamma$)
and we can decompose 
$$
  K_\gamma = K_{\gamma_1} \oplus \gamma_1 K_{\gamma/\gamma_1}
  \,.
$$ 

In each model space~$K_\gamma$ we can define a natural
involutive conjugation~$C_\gamma$ given by
\begin{equation}\label{natural}
  C_\gamma f := \gamma \bar{z} \bar{f}
\end{equation}
for every $f \in K_\gamma$.

\subsection{Example 1}\label{Ex1}
Let $\gamma,\alpha,\beta$ be inner functions, with $\gamma = \alpha\beta$.
Then $K_\gamma$ admits two orthogonal decompositions,
$$
  K_\gamma = K_\alpha \oplus \alpha K_\beta
  \,, \qquad
  K_\gamma = K_\beta \oplus \beta K_\alpha
  \,.
$$
Consequently, every function $f \in K_\gamma$ admits two representations
$$
  f = f_{1,\alpha} + \alpha f_{2,\beta}
  \,, \qquad
  f = f_{1,\beta} + \beta f_{2,\alpha}
  \,,
$$
with $f_{1,\alpha}, f_{2,\alpha} \in K_\alpha$
and $f_{1,\beta}, f_{2,\beta} \in K_\beta$.

Let $\xi \in \Real$ and define the antilinear operator $C_{\alpha,\beta}$
on~$K_\gamma$ by setting, for every $ f \in K_\gamma$,
$$
  C_{\alpha,\beta}(f) 
  = C_{\alpha,\beta}(f_{1,\alpha} + \alpha f_{2,\beta})
  := e^{i\xi} C_\beta f_{2,\beta} + \beta C_\alpha f_{1,\alpha} 
  \,,
$$
where $C_\beta(\varphi) = \beta \bar{z} \bar\varphi$
for $\varphi \in K_\beta$ 
and $C_\alpha(\varphi) = \alpha \bar{z} \bar\varphi$
for $\varphi \in K_\alpha$, 
according to~\eqref{natural}.  

\begin{Proposition}
One has, for every $ f \in K_\gamma$,
$$
  C_{\alpha,\beta}^*(f) 
  = C_{\alpha,\beta}^*(f_{1,\beta} + \beta f_{2,\alpha})
  = C_\alpha f_{2,\alpha} + e^{i\xi} \alpha C_\beta f_{1,\beta} 
  \,.
$$
\end{Proposition}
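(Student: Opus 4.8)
The plan is to verify the claimed formula directly from the definition of the adjoint of an antilinear operator. Recall that for antilinear~$A$ the adjoint~$A^*$ is characterised by $(\phi,A\psi) = \overline{(A^*\phi,\psi)}$ for all $\phi,\psi$, so with $f \in K_\gamma$ fixed it suffices to show, for every test vector $g \in K_\gamma$, that $(Df,g) = \overline{(f,C_{\alpha,\beta}g)}$, where $Df := C_\alpha f_{2,\alpha} + e^{i\xi}\alpha C_\beta f_{1,\beta}$ denotes the candidate right-hand side. The structural observation driving everything is that $C_{\alpha,\beta}$ sends a vector written in the decomposition $K_\gamma = K_\alpha \oplus \alpha K_\beta$ to one naturally written in $K_\gamma = K_\beta \oplus \beta K_\alpha$, while~$D$ does exactly the reverse; thus I would pair~$C_{\alpha,\beta}g$ against~$f$ using the \emph{second} decomposition of~$f$, and pair~$Df$ against~$g$ using the \emph{first} decomposition of~$g$. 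In each case the orthogonality of the relevant decomposition and the isometry of multiplication by an inner function collapse the pairing to two clean terms.

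First I would expand the right-hand side. Writing $g = g_{1,\alpha} + \alpha g_{2,\beta}$ gives $C_{\alpha,\beta}g = e^{i\xi} C_\beta g_{2,\beta} + \beta C_\alpha g_{1,\alpha}$, whose $K_\beta$- and $\beta K_\alpha$-components are explicit. Pairing against $f = f_{1,\beta} + \beta f_{2,\alpha}$, using orthogonality of $K_\gamma = K_\beta \oplus \beta K_\alpha$ together with $(\beta u,\beta v)=(u,v)$, I obtain
\[
  (f,C_{\alpha,\beta}g)
  = e^{i\xi}\,(f_{1,\beta},C_\beta g_{2,\beta})
  + (f_{2,\alpha},C_\alpha g_{1,\alpha})
  \,.
\]
Next I would expand the left-hand side. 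Since $Df = C_\alpha f_{2,\alpha} + e^{i\xi}\alpha C_\beta f_{1,\beta}$ is already written in $K_\gamma = K_\alpha \oplus \alpha K_\beta$, pairing against $g = g_{1,\alpha} + \alpha g_{2,\beta}$ and using orthogonality, $(\alpha u,\alpha v)=(u,v)$, and the conjugate-linearity of the inner product in its first slot gives
\[
  (Df,g)
  = (C_\alpha f_{2,\alpha},g_{1,\alpha})
  + e^{-i\xi}\,(C_\beta f_{1,\beta},g_{2,\beta})
  \,.
\]

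It is here that the involutivity of the natural conjugations of~\eqref{natural} is used. Because $C_\alpha$ and $C_\beta$ satisfy $C^{-1}=C$, formula~\eqref{crucial} reduces to $(u,Cv)=(v,Cu)$, and combined with conjugate symmetry this yields $(Cu,v)=\overline{(u,Cv)}$. Applying this identity to both terms of the last display and using $e^{-i\xi}\overline{X}=\overline{e^{i\xi}X}$, the right-hand side becomes $\overline{(f_{2,\alpha},C_\alpha g_{1,\alpha}) + e^{i\xi}(f_{1,\beta},C_\beta g_{2,\beta})}$, which is precisely $\overline{(f,C_{\alpha,\beta}g)}$ by the first computation; this is the required identity, so $D = C_{\alpha,\beta}^*$.

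The computation is essentially bookkeeping, so I expect the main obstacle to be organisational rather than conceptual: one must consistently assign~$f$ and~$g$ to the two different orthogonal decompositions (and swap them between the two pairings), and carefully track the conjugate-linearity of the inner product in its first argument so that the phase $e^{i\xi}$ is conjugated correctly at each step. The one genuine analytic ingredient is the reduction of~\eqref{crucial} to the symmetric relation $(Cu,v)=\overline{(u,Cv)}$, which relies on $C_\alpha$ and $C_\beta$ being involutive; this is exactly what makes the conjugate of $(f,C_{\alpha,\beta}g)$ coincide with $(Df,g)$ and thereby produces the clean expression stated in the proposition.
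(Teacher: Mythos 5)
Your proof is correct and uses exactly the same ingredients as the paper's: the orthogonality of the two decompositions $K_\gamma = K_\alpha \oplus \alpha K_\beta = K_\beta \oplus \beta K_\alpha$, the isometry of multiplication by an inner function, and the symmetry $(u,C_\gamma v)=(v,C_\gamma u)$ of the involutive natural conjugations. The only (cosmetic) difference is organisational: the paper runs a single chain transforming $(g,C_{\alpha,\beta}f)$ into $(f,C_{\alpha,\beta}^*g)$, whereas you compute $(f,C_{\alpha,\beta}g)$ and $(Df,g)$ separately and match them via the defining relation for the antilinear adjoint.
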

\begin{proof}
Let us abbreviate $C_{\alpha,\beta} =: C$.
For any $f,g \in K_\gamma$ with $f = f_{1,\alpha} + \alpha f_{2,\beta}$
and $g = g_{1,\beta} + \beta g_{2,\alpha}$, one has
$$
  (g,Cf) 
  = \big(g_{1,\beta} + \beta g_{2,\alpha},
  C(f_{1,\alpha} + \alpha f_{2,\beta})\big)
  = \big(g_{1,\beta} + \beta g_{2,\alpha}, 
  e^{i\xi} C_\beta f_{2,\beta} + \beta C_\alpha f_{1,\alpha} \big)
  \,.
$$ 
Since $e^{i\xi} C_\beta f_{2,\beta} \in K_\beta \bot \beta H_+^2$
and $\beta C_\alpha f_{1,\alpha} \in \beta H_+^2 \bot K_\beta$,
we get
$$
\begin{aligned}
  (g,Cf)
  &= (g_{1,\beta},e^{i\xi} C_\beta f_{2,\beta})
  + (\beta g_{2,\alpha},\beta C_\alpha f_{1,\alpha})
  \\
  &= (e^{-i\xi} g_{1,\beta}, C_\beta f_{2,\beta})
  + (g_{2,\alpha},C_\alpha f_{1,\alpha})
  \\
  &= \big(f_{2,\beta},C_\beta(e^{-i\xi} g_{1,\beta})\big)
  + (f_{1,\alpha},C_\alpha g_{2,\alpha})
  \\
  &= (f_{2,\beta},e^{i\xi}C_\beta g_{1,\beta})
  + (f_{1,\alpha},C_\alpha g_{2,\alpha})
  \\
  &= (\alpha f_{2,\beta},\alpha e^{i\xi}C_\beta g_{1,\beta})
  + (f_{1,\alpha},C_\alpha g_{2,\alpha})
  \\
  &= (f_{1,\alpha} + \alpha f_{2,\beta},
  C_\alpha g_{2,\alpha} + \alpha e^{i\xi}C_\beta g_{1,\beta}) 
  \\
  &= (f,C^* g)
  \,,
\end{aligned}  
$$
as required by the adjoint of an antilinear operator.
\end{proof}
\begin{Proposition}
$C_{\alpha,\beta}$ is antiunitary. 
\end{Proposition}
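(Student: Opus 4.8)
The plan is to verify directly the three defining properties of an antiunitary operator recalled in Section~\ref{Sec.pre}: that $C_{\alpha,\beta}$ is antilinear, isometric, and surjective onto $K_\gamma$. Antilinearity is immediate from the construction, since the components $f \mapsto f_{1,\alpha}$ and $f \mapsto f_{2,\beta}$ in the decomposition $K_\gamma = K_\alpha \oplus \alpha K_\beta$ are linear, multiplication by the fixed inner function $\beta$ and by the unimodular constant $e^{i\xi}$ is linear, while $C_\alpha$ and $C_\beta$ are antilinear; hence $f \mapsto e^{i\xi}C_\beta f_{2,\beta} + \beta C_\alpha f_{1,\alpha}$ is antilinear.

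For the isometry I would exploit both orthogonal decompositions of $K_\gamma$ at once. Since multiplication by the inner function $\alpha$ preserves the $\sii$-norm and the summands $K_\alpha$ and $\alpha K_\beta$ are orthogonal, the argument satisfies $\|f\|^2 = \|f_{1,\alpha}\|^2 + \|f_{2,\beta}\|^2$. The key observation is that the output is \emph{already} displayed along the \emph{other} decomposition $K_\gamma = K_\beta \oplus \beta K_\alpha$: indeed $e^{i\xi}C_\beta f_{2,\beta} \in K_\beta$ while $\beta C_\alpha f_{1,\alpha} \in \beta K_\alpha$, and these two summands are orthogonal. Applying the Pythagorean identity and using once more that multiplication by $\beta$ and by $e^{i\xi}$ is isometric gives $\|C_{\alpha,\beta}f\|^2 = \|C_\beta f_{2,\beta}\|^2 + \|C_\alpha f_{1,\alpha}\|^2$. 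Because $C_\alpha$ and $C_\beta$ are the natural conjugations of~\eqref{natural} on $K_\alpha$ and $K_\beta$, they are themselves isometric, so the right-hand side collapses to $\|f_{2,\beta}\|^2 + \|f_{1,\alpha}\|^2 = \|f\|^2$, proving $\|C_{\alpha,\beta}f\| = \|f\|$.

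It remains to establish surjectivity, and here the preceding proposition does the work for free. I would note that the formula for $C_{\alpha,\beta}^*$ derived there has exactly the same structure as $C_{\alpha,\beta}$ but with the roles of $\alpha$ and $\beta$ interchanged (it consumes the $\beta$-decomposition of its argument and returns a vector split along the $\alpha$-decomposition); repeating the isometry argument verbatim shows that $C_{\alpha,\beta}^*$ is isometric as well. An isometry yields $C_{\alpha,\beta}^* C_{\alpha,\beta} = I$, while the isometry of $C_{\alpha,\beta}^*$ together with $C_{\alpha,\beta}^{**}=C_{\alpha,\beta}$ yields $C_{\alpha,\beta} C_{\alpha,\beta}^* = I$; hence $C_{\alpha,\beta}$ is a bijection with inverse $C_{\alpha,\beta}^*$, and in particular $\Ran(C_{\alpha,\beta}) = K_\gamma$. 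Alternatively, and just as quickly, given $g = g_{1,\beta} + \beta g_{2,\alpha} \in K_\gamma$ one sets $f := C_\alpha g_{2,\alpha} + \alpha\,e^{i\xi} C_\beta g_{1,\beta}$ and checks $C_{\alpha,\beta}f = g$ directly, using the involutivity of $C_\alpha$ and $C_\beta$.

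I expect the only genuine point of care to be the bookkeeping between the two decompositions: one must notice that $C_{\alpha,\beta}$ takes in the $\alpha$-decomposition of its argument but returns a vector canonically split along the $\beta$-decomposition, so that orthogonality — and hence the Pythagorean computation of the norm — is available on \emph{both} sides of the map. Once this is seen, the isometry of the building blocks $C_\alpha$, $C_\beta$ and of multiplication by inner functions renders every remaining step routine.
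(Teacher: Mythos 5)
Your proof is correct, but it reaches the conclusion by a somewhat different route than the paper. The paper's proof is a single operator computation: it feeds the output $C_{\alpha,\beta}f = e^{i\xi}C_\beta f_{2,\beta} + \beta C_\alpha f_{1,\alpha}$ (already split along $K_\gamma = K_\beta \oplus \beta K_\alpha$) into the formula for $C_{\alpha,\beta}^*$ from the preceding proposition and uses the involutivity and antilinearity of the natural conjugations $C_\alpha$, $C_\beta$ to collapse $C_{\alpha,\beta}^*C_{\alpha,\beta}f$ back to $f$ (the phases $e^{i\xi}$ and $e^{-i\xi}$ cancelling), with $C_{\alpha,\beta}C_{\alpha,\beta}^* = I$ handled analogously. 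You instead verify the definition of antiunitarity piecewise: antilinearity from the construction, the isometry by a Pythagorean computation exploiting that the map consumes the $\alpha$-decomposition and emits along the $\beta$-decomposition, and surjectivity either from the isometry of $C_{\alpha,\beta}^*$ or by exhibiting the explicit preimage $f = C_\alpha g_{2,\alpha} + \alpha e^{i\xi}C_\beta g_{1,\beta}$ --- which is in substance the same cancellation the paper performs. Your geometric isometry argument has the small advantage of not needing the adjoint formula at all for that step (only the orthogonality of the two decompositions and the unimodularity of inner functions), while the paper's computation is shorter because the previous proposition has already done the bookkeeping; both hinge on the same structural facts, namely the two orthogonal decompositions of $K_\gamma$ and the involutivity of $C_\alpha$ and $C_\beta$.
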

\begin{proof}
For every $f \in K_\gamma$, one has 
(again abbreviating $C_{\alpha,\beta} =: C$)
$$
\begin{aligned}
  C^* C f = C^* C (f_{1,\alpha} + \alpha f_{2,\beta})
  &= C^* (e^{i\xi} C_\beta f_{2,\beta} + \beta C_\alpha f_{1,\alpha})
  \\
  &= C_\alpha(C_\alpha f_{1,\alpha})
  + e^{i\xi} \alpha C_\beta(e^{i\xi} C_\beta f_{2,\beta} )
  \\
  &= f_{1,\alpha} + \alpha f_{2,\beta} = f \,.
\end{aligned}  
$$
Analogously, one can show $C C^* f = f$.
\end{proof}
\begin{Remark}
If $\alpha=\beta$, we have
$
  C_{\alpha,\alpha} (f_{1,\alpha} + \alpha f_{2,\alpha})
  = e^{i\xi} f_{2,\alpha} + \alpha f_{1,\alpha}
$
and 
$$
  C_{\alpha,\alpha}^2 (f_{1,\alpha} + \alpha f_{2,\alpha})
  = e^{i\xi} f_{1,\alpha} + \alpha e^{-i\xi} f_{2,\alpha}
  \,.
$$
If $e^{i\xi} =1$, then $C_{\alpha,\alpha}^2 = I$
and it turns out that $C_{\alpha,\alpha}$ is the usual (involutive) 
conjugation $C_{\alpha^2}$ on $K_{\alpha^2}$, see~\eqref{natural}.
On the other hand,
if $e^{i\xi} =-1$, then  
we have $C_{\alpha,\alpha}^2 = -I$, 
with 
$
  C_{\alpha,\alpha} f  
  = -C_\alpha f_{2,\alpha} + \alpha C_\alpha f_{1,\alpha}
$.
\end{Remark}

As an example of an operator on a model space $K_{\alpha^2}$
which is $C_{\alpha,\alpha}$-self-adjoint,
but not $C_{\alpha^2}$-self-adjoint,
consider the case $e^{i\xi} =-1$,
$\alpha := z^2$ and the operator~$H$ defined on $K_{z^4}$
(the space of polynomials of degree not greater than~$3$) by 
(with respect to the standard basis of $K_{z^4}$)
$$
  H := 
  \begin{pmatrix}
    a_{11} & a_{12} & a_{13} & 0 \\
    a_{21} & a_{22} & 0 & -a_{13} \\
    a_{31} & 0 & a_{22} & a_{12} \\
    0 & - a_{31} & a_{21} & a_{11} 
  \end{pmatrix}
  ,
$$
where $a_{jk}$ with $j,k \in \{1,2,3\}$ are complex numbers.
Then (abbreviating $C_{\alpha,\alpha} =: C$)
$$
  CHC^{-1} = - CHC = H^* \,.
$$

\subsection{Example 2}
Let~$C$ be the antilinear operator defined on~$\sii$ by
$$
  Cf := \frac{z-\bar{z}}{2} \ \overline{f(\bar{z})}
  + \frac{z+\bar{z}}{2} \ \overline{f(-\bar{z})}
  = z \ \frac{\overline{f(\bar{z})}+\overline{f(-\bar{z})}}{2}
  - \bar{z} \ \frac{\overline{f(\bar{z})}-\overline{f(-\bar{z})}}{2}
  \,.
$$

\begin{Proposition}
$C^* = -C$.
\end{Proposition}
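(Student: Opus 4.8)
The plan is to reduce the operator identity to a scalar symmetry relation and then verify it on the Fourier basis. Recall from the convention for the adjoint of an antilinear operator that $C^* = -C$ holds if, and only if, $(g,Cf) = -(f,Cg)$ for all $f,g \in \sii$; since both sides are antilinear in each of $f$ and $g$ and $C$ is bounded, it suffices to check this on the orthonormal basis $e_n(z) := z^n$, $n \in \Int$, of $\sii = \sii(\Circle)$. Thus the whole statement comes down to proving $(e_m,Ce_n) = -(e_n,Ce_m)$ for every $m,n \in \Int$.

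First I would compute the action of~$C$ on the basis. On the unit circle one has $\bar z = z^{-1}$, so $\overline{e_n(\bar z)} = \overline{(\bar z)^n} = z^n$ and $\overline{e_n(-\bar z)} = (-1)^n z^n$. Substituting into the definition and using $\tfrac{z-\bar z}{2}\,z^n = \tfrac{z^{n+1}-z^{n-1}}{2}$ and $\tfrac{z+\bar z}{2}\,z^n = \tfrac{z^{n+1}+z^{n-1}}{2}$ gives
\[
  C e_n = \frac{z^{n+1}-z^{n-1}}{2} + (-1)^n\,\frac{z^{n+1}+z^{n-1}}{2},
\]
which collapses to $Ce_n = e_{n+1}$ when $n$ is even and to $Ce_n = -e_{n-1}$ when $n$ is odd. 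These substitutions merely reindex and conjugate Fourier coefficients, so they extend from the dense span of the $e_n$ to a bounded antilinear operator on~$\sii$, which legitimises the reduction to the basis.

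It then remains to read off the matrix elements. The inner product $(e_m,Ce_n)$ is nonzero only when $m=n+1$ with $n$ even, where it equals~$1$, or when $m=n-1$ with $n$ odd, where it equals~$-1$. In the first case $m$ is odd, so $Ce_m = -e_{m-1} = -e_n$ and $-(e_n,Ce_m) = 1$; in the second case $m$ is even, so $Ce_m = e_{m+1} = e_n$ and $-(e_n,Ce_m) = -1$. In every case $(e_m,Ce_n) = -(e_n,Ce_m)$, which is the desired identity, and $C^* = -C$ follows.

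The computation is elementary, so there is no deep obstacle; the only point demanding care is the correct interpretation of the symbols $f(\bar z)$ and $f(-\bar z)$ on~$\Circle$ (where $\bar z = z^{-1}$) together with the outer complex conjugation, and the verification that these pointwise substitutions genuinely define bounded antilinear maps on~$\sii$, so that checking the identity on $(e_n)_{n\in\Int}$ is conclusive. As a cross-check one could instead establish $(g,Cf)=-(f,Cg)$ directly by the changes of variables $z\mapsto\bar z$ and $z\mapsto-\bar z$ in the defining integral, but the basis computation is more transparent.
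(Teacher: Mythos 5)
Your proof is correct, but it takes a genuinely different route from the paper's. The paper verifies the identity $(g,Cf)=(f,-Cg)$ directly on arbitrary $f,g\in\sii$: it splits $g$ into its even and odd parts, kills the cross terms using the orthogonality of $z$ times an even function against even functions (and the analogous statement for odd parts), and then performs the substitutions $z\mapsto\bar z$ and $z\mapsto-\bar z$ inside the inner products --- essentially the ``cross-check'' you mention at the end. You instead compute the matrix of $C$ in the Fourier basis $e_n(z)=z^n$, finding $Ce_n=e_{n+1}$ for even $n$ and $Ce_n=-e_{n-1}$ for odd $n$ (this agrees with the coefficient formula the paper itself uses later to prove $C(H_+^2)=H_+^2$), and then check the antisymmetry $(e_m,Ce_n)=-(e_n,Ce_m)$ entrywise; your case analysis is complete and correct. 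The reduction to the basis is legitimate under the paper's conventions: both $(f,g)\mapsto(g,Cf)$ and $(f,g)\mapsto-(f,Cg)$ are bounded forms, antilinear in each argument, so agreement on an orthonormal basis forces agreement everywhere, and the boundedness of $C$ is immediate since it permutes the basis up to signs. What your approach buys is transparency and reuse --- the same computation delivers the antiunitarity and $C^2=-I$ of the following proposition, as well as Proposition~\ref{Prop3}, essentially for free --- whereas the paper's argument works directly with general $f,g$ and thus sidesteps any discussion of continuity and completeness.
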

\begin{proof}
For every $f,g \in \sii$, one has
$$
  (g,Cf) = \left(
  \frac{g(z)+g(-z)}{2} + \frac{g(z)-g(-z)}{2},
  z \ \frac{\overline{f(\bar{z})}+\overline{f(-\bar{z})}}{2}
  - \bar{z} \ \frac{\overline{f(\bar{z})}-\overline{f(-\bar{z})}}{2}
  \right)
  .
$$
Noting that
$$
  \frac{g(z)+g(-z)}{2} 
  \ \mbox{\Large $\bot$} \ 
  z \ \frac{\overline{f(\bar{z})}+\overline{f(-\bar{z})}}{2}
  \qquad\mbox{and}\qquad
  \frac{g(z)-g(-z)}{2} 
  \ \mbox{\Large $\bot$} \ 
  \bar{z} \ \frac{\overline{f(\bar{z})}-\overline{f(-\bar{z})}}{2}
  \,,
$$ 
we get
$$
\begin{aligned}
  (g,Cf) &=
  \left(
  \frac{g(z)-g(-z)}{2} , 
  z \ \frac{\overline{f(\bar{z})}+\overline{f(-\bar{z})}}{2}
  \right)
  - \left( 
  \frac{g(z)+g(-z)}{2} ,
  \bar{z} \ \frac{\overline{f(\bar{z})}-\overline{f(-\bar{z})}}{2}
  \right)
  \\
  &= 
  \left(
  \bar{z} \ \frac{g(z)-g(-z)}{2} , 
  \frac{\overline{f(\bar{z})}+\overline{f(-\bar{z})}}{2}
  \right)
  - \left( 
  z \ \frac{g(z)+g(-z)}{2} ,
  \frac{\overline{f(\bar{z})}-\overline{f(-\bar{z})}}{2}
  \right)
  \\
  &= 
  \left(
  \frac{f(\bar{z})+f(-\bar{z})}{2} ,
  z \ \frac{\overline{g(z)}-\overline{g(-z)}}{2}
  \right)
  - \left( 
  \frac{f(\bar{z})-f(-\bar{z})}{2} ,
  \bar{z} \ \frac{\overline{g(z)}+\overline{g(-z)}}{2} 
  \right)
  \\
  &= 
  \left(
  \frac{f(z)+f(-z)}{2} ,
  \bar{z} \ \frac{\overline{g(\bar{z})}-\overline{g(-\bar{z})}}{2}
  \right)
  + \left( 
  \frac{f(z)-f(-z)}{2} ,
  -z \ \frac{\overline{g(\bar{z})}+\overline{g(-\bar{z})}}{2} 
  \right)
  \\
  &= 
  \left(
  f(z) ,
  \bar{z} \ \frac{\overline{g(\bar{z})}-\overline{g(-\bar{z})}}{2}
  - z \ \frac{\overline{g(\bar{z})}+\overline{g(-\bar{z})}}{2} 
  \right)
  = (f,-Cg) \,,
\end{aligned}  
$$
so the desired claim follows by the definition 
of the adjoint of an antilinear operator.
\end{proof}

It is straightforward to verify the following claims.
\begin{Proposition}
$C$ is antiunitary and $C^2 = -I$.
\end{Proposition}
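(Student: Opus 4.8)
The plan is to reduce both assertions to a single computation of~$C^2$, exploiting the relation $C^* = -C$ already established in the preceding proposition. Antilinearity of~$C$ is immediate from its defining formula, since~$C$ acts on~$f$ through the complex conjugates $\overline{f(\bar z)}$ and $\overline{f(-\bar z)}$; the two substitutions $z \mapsto \bar z$ and $z \mapsto -\bar z$ are induced by measure-preserving maps of~$\Circle$, so each term is bounded on $\sii(\Circle)$ and~$C$ is a well-defined antilinear operator on all of~$\sii$. The substantive content is therefore the identity $C^2 = -I$ together with antiunitarity. Once $C^2 = -I$ is known, antiunitarity follows at once from $C^* = -C$: one gets $C^* C = -C^2 = I$, so~$C$ is isometric, and $C C^* = -C^2 = I$, so~$C$ is surjective, and an antilinear isometry of~$\sii$ onto itself is by definition antiunitary.

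It remains to prove $C^2 = -I$, and the most transparent route is to pass to the Fourier orthonormal basis $\{z^n\}_{n \in \Int}$ of $\sii = \sii(\Circle)$. On the circle one has $\bar z = z^{-1}$, so the two building blocks of~$C$ act very simply on Fourier coefficients: writing $f(z) = \sum_n \hat f(n) z^n$, one finds that $\overline{f(\bar z)}$ has coefficients $\overline{\hat f(n)}$, while $\overline{f(-\bar z)}$ has coefficients $(-1)^n \overline{\hat f(n)}$. Consequently the two half-sums $\tfrac12\big(\overline{f(\bar z)} + \overline{f(-\bar z)}\big)$ and $\tfrac12\big(\overline{f(\bar z)} - \overline{f(-\bar z)}\big)$ are precisely the even-mode and odd-mode parts of the coefficient-conjugate of~$f$. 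Multiplying the first by~$z$ and the second by $\bar z = z^{-1}$ then merely shifts indices by~$\pm 1$, and I expect this to yield the clean formulas $C z^{2k} = z^{2k+1}$ and $C z^{2k+1} = -z^{2k}$ for every $k \in \Int$.

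With these formulas in hand both claims are immediate: $C$ sends the orthonormal basis $\{z^n\}$ to the orthonormal set $\{\,z^{2k+1},\,-z^{2k} : k \in \Int\,\}$, which is again an orthonormal basis, so the antilinear map~$C$ is antiunitary; and on each pair one computes $C^2 z^{2k} = C z^{2k+1} = -z^{2k}$ and $C^2 z^{2k+1} = C(-z^{2k}) = -z^{2k+1}$, whence $C^2 = -I$. The main obstacle is purely bookkeeping: one must correctly track the interplay between the factor $(-1)^n$ coming from $\overline{f(-\bar z)}$, the even/odd splitting produced by the two half-sums, the index shifts caused by the multipliers~$z$ and~$\bar z$, and the sign attached to the $\bar z$-term in the definition of~$C$. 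A direct verification by substituting the defining formula of~$C$ into itself is also possible, but it involves heavier manipulation of the even and odd parts; the Fourier computation has the advantage of delivering both assertions of the proposition simultaneously.
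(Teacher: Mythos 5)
Your proof is correct: the paper itself offers no argument (it declares the claim ``straightforward to verify''), and your Fourier computation $Cz^{2k}=z^{2k+1}$, $Cz^{2k+1}=-z^{2k}$ agrees exactly with the coefficient formula the authors display in their proof of the subsequent proposition on the invariance of $H_+^2$, so this is precisely the verification they have in mind. Both of your routes to antiunitarity (the basis-to-basis argument and the derivation from $C^*=-C$ together with $C^2=-I$) are valid.
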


It is perhaps less obvious that the Hardy space $H_+^2$
is left invariant by~$C$.
\begin{Proposition}\label{Prop3}
$C(H_+^2)=H_+^2$.  
\end{Proposition}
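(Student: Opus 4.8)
The plan is to show that $C$ maps $H_+^2$ into itself by testing the defining formula for $C$ on the monomial basis $(z^n)_{n \geq 0}$ of $H_+^2$ and verifying that each image is again a nonnegative-power series. First I would recall the explicit action
\begin{equation*}
  Cf = z \ \frac{\overline{f(\bar{z})}+\overline{f(-\bar{z})}}{2}
  - \bar{z} \ \frac{\overline{f(\bar{z})}-\overline{f(-\bar{z})}}{2}
  \,,
\end{equation*}
and compute $C(z^n)$ for $n \geq 0$. Writing $f(z)=z^n$, one has $\overline{f(\bar z)} = \overline{\bar z^n} = z^n$ on~$\Circle$ (since $z^n$ has real coefficients), and $\overline{f(-\bar z)} = \overline{(-1)^n \bar z^n} = (-1)^n z^n$. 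Hence the two averaging combinations in the formula collapse to $z^n$ when $n$ is even and to $0$ when $n$ is odd (for the symmetric part), with the reverse pattern for the antisymmetric part.

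Carrying out the two parity cases gives the clean result $C(z^n) = z \cdot z^n = z^{n+1}$ when $n$ is even and $C(z^n) = -\bar z \cdot z^n = -z^{n-1}$ when $n$ is odd. Both outputs are monomials with exponent $\geq 0$ (note that $n$ odd forces $n \geq 1$, so $z^{n-1}$ is still a nonnegative power), hence $C(z^n) \in H_+^2$ for every $n \geq 0$. By antilinearity and additivity, together with the continuity of~$C$ (which is antiunitary by the preceding Proposition, hence bounded), this extends from the dense span of the monomials to all of $H_+^2$, giving $C(H_+^2) \subset H_+^2$. The reverse inclusion then follows immediately: since $C^2 = -I$, any $g \in H_+^2$ equals $C(-Cg)$ with $-Cg \in C(H_+^2) \subset H_+^2$, so $g \in C(H_+^2)$, whence $C(H_+^2)=H_+^2$.

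I expect the main obstacle to be purely computational rather than conceptual: one must handle the $\bar z$ coefficient carefully, because on~$\Circle$ the factor $\bar z$ is $z^{-1}$, which a priori threatens to produce a negative Fourier mode. The key point to verify cleanly is that whenever the antisymmetric (odd-$n$) combination is nonzero and thus gets multiplied by $\bar z$, the surviving monomial $z^n$ has $n \geq 1$, so the multiplication by $z^{-1}$ lands back in $H_+^2$ rather than escaping into $H_-^2$; this is exactly the parity bookkeeping that makes the invariance work. A small alternative to reduce the risk of sign errors would be to split any $f \in H_+^2$ into its even and odd parts $f_{\mathrm e}(z)=\tfrac{f(z)+f(-z)}{2}$ and $f_{\mathrm o}(z)=\tfrac{f(z)-f(-z)}{2}$ from the outset and track the two pieces separately, but the monomial computation is the most transparent route and I would present it first.
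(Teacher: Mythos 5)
Your proposal is correct and follows essentially the same route as the paper, which likewise computes the action of $C$ on the Taylor coefficients, obtaining $Cf(z) = -\bar{a}_1 + \bar{a}_0 z - \bar{a}_3 z^2 + \bar{a}_2 z^3 + \cdots$ for $f(z)=a_0+a_1z+a_2z^2+\cdots$; your version merely spells out the parity bookkeeping, the density/continuity step, and the surjectivity via $C^2=-I$ that the paper leaves implicit.
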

\begin{proof}
The claim follows from the observation that for $f \in H_+^2$
with $f(z) = a_0 + a_1 z + a_2 z^2 + a_3 z^3 + \dots$
we have that 
$Cf(z) = -\bar{a}_1 + \bar{a}_0 z - \bar{a}_3 z^2 + \bar{a}_2 z^3 + \dots$
\end{proof}
\begin{Remark}
The usual conjugation in $H_+^2$ is given by
(\cf~\cite{Ko-Lee_2016,Camara-Klis-Garlicka-Lanucha-Ptak_2020})
$$
  \tilde{C}f (z) := \overline{f(\bar{z})}
  \,,
$$ 
so that for $f(z) = a_0 + a_1 z + a_2 z^2 + \dots$
we have that $\tilde{C}f(z) = \bar{a}_0 + \bar{a}_1 z + \bar{a}_2 z^2 + \dots$
\end{Remark} 
\begin{Corollary}
$CP^\pm = P^\pm C$.
\end{Corollary}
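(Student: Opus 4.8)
The plan is to reduce the identity $CP^\pm=P^\pm C$ to the single geometric fact that~$C$ leaves \emph{both} summands of the orthogonal decomposition $\sii=H_+^2\oplus H_-^2$ invariant. Invariance of the first summand, $C(H_+^2)=H_+^2$, is already Proposition~\ref{Prop3}; invariance of the second, $C(H_-^2)=H_-^2$, is the only thing that needs an argument. The key input is that an antiunitary operator preserves orthogonality: recall from the preliminaries that, since~$C$ is antilinear and isometric, one has $(Cf,Cg)=(g,f)$ for all $f,g\in\sii$, so $f\bot g$ forces $Cf\bot Cg$.

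First I would establish the inclusion $C(H_-^2)\subset H_-^2$. Take any $h\in H_-^2=(H_+^2)^\bot$ and any $s\in H_+^2$. Then $(Ch,Cs)=(s,h)=0$, so $Ch\bot C(H_+^2)$; since $C(H_+^2)=H_+^2$ by Proposition~\ref{Prop3}, this says exactly $Ch\in (H_+^2)^\bot=H_-^2$. To upgrade this inclusion to an equality I would invoke $C^2=-I$: applying~$C$ to $C(H_-^2)\subset H_-^2$ gives $C^2(H_-^2)\subset C(H_-^2)$, and the left-hand side equals $-H_-^2=H_-^2$ because $H_-^2$ is a subspace, whence $H_-^2\subset C(H_-^2)$ and therefore $C(H_-^2)=H_-^2$.

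Finally I would conclude by the uniqueness of the orthogonal splitting. For an arbitrary $f\in\sii$, additivity of the antilinear map~$C$ yields $Cf=C(P^+f)+C(P^-f)$, where $C(P^+f)\in C(H_+^2)=H_+^2$ and $C(P^-f)\in C(H_-^2)=H_-^2$. This is precisely the decomposition of~$Cf$ along $H_+^2\oplus H_-^2$, so by uniqueness $P^\pm(Cf)=C(P^\pm f)$, i.e.\ $CP^\pm=P^\pm C$.

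The only nontrivial step is the middle one, proving $C(H_-^2)=H_-^2$; I expect no real obstacle here, the sole point of care being to run the orthogonal-complement argument for an \emph{antilinear} operator (using $(Cf,Cg)=(g,f)$ and $C^2=-I$) rather than borrowing the familiar unitary reasoning uncritically.
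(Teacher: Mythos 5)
Your argument is correct. Note that the paper states this corollary without any proof at all, so there is nothing to match it against line by line; what you have written is a legitimate way to fill the gap, and every ingredient you invoke is available in the paper: the identity $(Cf,Cg)=(g,f)$ for antilinear isometries is stated in the preliminaries of Section~2, $C^2=-I$ is the content of the proposition immediately preceding Proposition~\ref{Prop3}, and $C(H_+^2)=H_+^2$ is Proposition~\ref{Prop3} itself. Your chain (invariance of $H_+^2$ plus antiunitarity forces $C(H_-^2)\subset H_-^2$; the inclusion upgrades to equality via $C^2=-I$; commutation with $P^\pm$ then follows from uniqueness of the splitting along $\sii=H_+^2\oplus H_-^2$) is sound, including the care taken with surjectivity when passing from $Ch\bot C(H_+^2)$ to $Ch\bot H_+^2$. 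The one stylistic difference worth noting is that the paper's own proof of Proposition~\ref{Prop3} is by explicit Fourier-coefficient computation, and the most ``in-house'' route to the corollary would be to extend that computation to all of $\sii$: one finds that $C$ sends the coefficient pair $(a_{2k},a_{2k+1})$ to $(-\bar a_{2k+1},\bar a_{2k})$ for every $k\in\Int$, so it visibly preserves both the nonnegative and the negative frequencies and hence commutes with $P^\pm$. Your abstract argument buys generality (it uses only that $C$ is antiunitary with $C^2=\pm I$ and leaves one summand invariant), while the coefficient computation buys an explicit formula; both are complete proofs.
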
 

From Proposition~\ref{Prop3},
the restriction $C |_{H_+^2}$
is an anti-involutive conjugation on $H_+^2$,
which we will also denote by~$C$.

\begin{Proposition} 
If~$\theta$ is an inner function such that 
\begin{equation}\label{theta}
  \theta(\bar{z}) = \theta(-\bar{z}) = \overline{\theta(z)}
\end{equation}
for all $z \in \Disk$, 
then $C(K_\theta) = K_\theta$ and 
$\tilde{C}_\theta := C |_{K_\theta}$
defines an anti-involutive conjugation on~$K_\theta$.
\end{Proposition}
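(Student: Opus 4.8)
The plan is to reduce the invariance of $K_\theta$ to a single intertwining identity, namely that $C$ commutes with multiplication by $\theta$ on $H_+^2$, and then to propagate this to the orthogonal complement using that $C$ is antiunitary and already preserves $H_+^2$ (Proposition~\ref{Prop3}). First I would record the two facts I shall lean on: that $K_\theta = H_+^2 \ominus \theta H_+^2$ (the definition of the model space with $H^2 = H_+^2$), and that $\theta H_+^2$ is a \emph{closed} subspace of $H_+^2$, since $\theta$ is inner and hence multiplication $M_\theta$ is an isometry of $H_+^2$. Thus $K_\theta$ is genuinely the orthogonal complement of a closed, $C$-relevant subspace.

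The heart of the matter is to prove $C(\theta g) = \theta\,(Cg)$ for every $g \in H_+^2$, i.e.\ $C M_\theta = M_\theta C$. I would start from the defining formula for $C$ applied to $h := \theta g$, so that $\overline{h(\bar z)} = \overline{\theta(\bar z)}\,\overline{g(\bar z)}$ and $\overline{h(-\bar z)} = \overline{\theta(-\bar z)}\,\overline{g(-\bar z)}$. Conjugating the hypotheses~\eqref{theta} gives $\overline{\theta(\bar z)} = \theta(z) = \overline{\theta(-\bar z)}$, so the common factor $\theta(z)$ can be pulled out of both summands, and what remains inside the brackets is precisely the defining expression for $(Cg)(z)$. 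This yields $C(\theta g) = \theta\,(Cg)$, all manipulations being valid a.e.\ on~$\Circle$ because $\theta \in H^\infty$ and $g \in H_+^2$.

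Granting this, the invariance follows formally. Combining $C M_\theta = M_\theta C$ with Proposition~\ref{Prop3} ($C(H_+^2) = H_+^2$) gives $C(\theta H_+^2) = \theta\, C(H_+^2) = \theta H_+^2$. Now $C$ is antiunitary on $H_+^2$ and leaves both $H_+^2$ and the closed subspace $\theta H_+^2$ invariant; since an antiunitary map sends orthogonal pairs to orthogonal pairs, it maps $(\theta H_+^2)^\bot$ into itself, and applying the same argument to the (also antiunitary) inverse $C^{-1}$ upgrades this to the equality $C(K_\theta) = K_\theta$. Finally, $\tilde C_\theta := C|_{K_\theta}$ is the restriction of an antiunitary operator to an invariant subspace, hence antiunitary on $K_\theta$, and $\tilde C_\theta^2 = C^2|_{K_\theta} = -I$, so it is an anti-involutive conjugation on $K_\theta$, as claimed.

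I expect the only real obstacle to be the verification of the intertwining $C(\theta g) = \theta\,(Cg)$: one must use the \emph{conjugated} form of~\eqref{theta} in exactly the right places and keep the two conjugation operations (complex conjugation of values versus the circle involution $z \mapsto \bar z$) cleanly separated. Once that identity is established, the passage from $\theta H_+^2$ to $K_\theta$ and the anti-involutivity of the restriction are entirely routine consequences of antiunitarity and $C^2 = -I$.
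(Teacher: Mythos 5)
Your proof is correct, but it is organised differently from the paper's. The paper works directly with the characterisation $K_\theta=\{f_+\in H_+^2:\ \bar\theta f_+\in H_-^2\}$: it takes $f_+\in K_\theta$ with $\bar\theta f_+=f_-\in H_-^2$, uses~\eqref{theta} to convert this relation into the corresponding relations for $\overline{f_+(\bar z)}$ and $\overline{f_+(-\bar z)}$, and then checks by hand that $\bar\theta\,(Cf_+)$ equals the $C$-type combination built from $f_-$, hence lies in $H_-^2$; the equality $C(K_\theta)=K_\theta$ then follows from $C^2=-I$ exactly as in your last step. You instead isolate the intertwining identity $CM_\theta=M_\theta C$ on $H_+^2$ and deduce invariance of $K_\theta=H_+^2\ominus\theta H_+^2$ abstractly, from the fact that an antiunitary bijection preserving a closed subspace preserves its orthogonal complement. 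The underlying computation is the same in both cases (conjugating~\eqref{theta} to pull the factor $\theta(z)$ out of both summands), but your packaging is arguably cleaner: it avoids the paper's implicit final step of verifying that the resulting expression really lies in $H_-^2$ (which needs a small parity bookkeeping with the powers of $z$ and $\bar z$), and the commutation relation $CM_\theta=M_\theta C$ is a reusable statement in its own right. The only point worth a sentence in a polished write-up is the passage from~\eqref{theta}, stated for $z\in\Disk$, to the corresponding a.e.\ identities for the boundary functions on~$\Circle$ that your manipulation actually uses; the paper glosses over the same point.
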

\begin{proof}
We have that $f_+ \in K_\theta$ if, and only if,
$f_+ \in H_+^2$ and $\bar\theta f_+ = f_-$ 
where $f_- \in H_-^2$.  
Let~$f_+$ be any element of~$K_\theta$.
Then 
$$
\begin{aligned}
  \overline{\theta(z)} f_+(z) = f_-(z)
  & \Longleftrightarrow 
  \theta(\bar{z}) \overline{f_+(\bar{z})} = \overline{f_-(\bar{z})}
  \\
  & \Longleftrightarrow 
  \overline{\theta(z)} \, \overline{f_+(\bar{z})} = \overline{f_-(\bar{z})}
  \\
  & \Longleftrightarrow 
  \overline{\theta(-z)} \, \overline{f_+(-\bar{z})} = \overline{f_-(-\bar{z})}
  \\
  & \Longleftrightarrow 
  \overline{\theta(z)} \, \overline{f_+(-\bar{z})} = \overline{f_-(-\bar{z})}
  \,.
\end{aligned} 
$$
Consequently,
$$
\begin{aligned}
  \bar\theta \tilde{C}_\theta f(z) 
  &= \overline{\theta(z)} 
  \left(
  z \ \frac{\overline{f_+(\bar{z})}+\overline{f_+(-\bar{z})}}{2}
  - \bar{z} \ \frac{\overline{f_+(\bar{z})}-\overline{f_+(-\bar{z})}}{2}
  \right)
  \\
  &= z \ \frac{\overline{f_-(\bar{z})}+\overline{f_-(-\bar{z})}}{2}
  - \bar{z} \ \frac{\overline{f_-(\bar{z})}-\overline{f_-(-\bar{z})}}{2}
  \in H_-^2
  \,.
\end{aligned} 
$$
Therefore $C(K_\theta) \subset K_\theta$
and since $C^2 = -I$ we have also $K_\theta \subset C(K_\theta)$.
\end{proof}
\begin{Remark}
It follows that $C |_{K_\theta}$ is also an anti-involutive 
conjugation on~$K_\theta$ if~$\theta$ satisfies~\eqref{theta}.   
\end{Remark} 

For example, if $\theta := z^4$ and
$
  f_+(z) = a_0 + a_1 z + a_2 z^2 + a_3 z^3 \in K_{z^4}
$, 
where $a_0,a_1,a_2,a_3$ are complex numbers,
then
$
  Cf_+(z) 
  = -\bar{a}_1 + \bar{a}_0 z - \bar{a}_3 z^2 + \bar{a}_2 z^3 \in K_{z^4}
$.
(Comparing with the conjugation $C_{z^2,z^2}$ on $K_{z^4}$,
defined in Section~\ref{Ex1}, we have 
$
  C_{z^2,z^2}f_+(z) 
  = -\bar{a}_3 - \bar{a}_2 z + \bar{a}_1 z^2 + \bar{a}_0 z^3 
$.) 
An operator~$H$ defined on $K_{z^4}$ is $C$-self-adjoint
if it is defined by a complex matrix of the form
$$
  H := 
  \begin{pmatrix}
    a_{11} & 0 & a_{13} & a_{14} \\
    0 & a_{11} & a_{23} & a_{24} \\
    a_{24} & -a_{14} & a_{33} & 0 \\
    - a_{23} & a_{13} & 0 & a_{33} 
  \end{pmatrix}
  .
$$

As an example of an operator on $H_+^2$
which is $C$-self-adjoint (denoting $C |_{H_+^2} =: C$),
one can consider the following natural generalisation of Toeplitz operators 
(defined, for $\varphi \in L^\infty$,
as $T_\varphi: H_+^2 \to H_+^2$, 
$T_\varphi f_+ := P^+ \varphi f_+$),
with $\varphi_1, \varphi_2 \in L^\infty$:
$$
  T_{\varphi_1, \varphi_2} : H_+^2 \to H_+^2
  \,, \qquad
  \begin{aligned}
    T_{\varphi_1, \varphi_2} (f_+) := \ &
    P^+ \left(
    \varphi_1(z) \ \frac{f_+(z)+f_+(-z)}{2} 
    +  \varphi_2(z) \ \frac{f_+(z)-f_+(-z)}{2} 
    \right)
    \\
    = \ &
    P^+ \left(
    \frac{\varphi_1(z)+\varphi_2(z)}{2}  \ f_+(z)
    + \frac{\varphi_1(z)-\varphi_2(z)}{2}  \ f_+(-z)
    \right) .
  \end{aligned}
$$
We shall occasionally abbreviate $T_{\varphi_1, \varphi_2} =: T$.

\begin{Proposition}\label{Prop.8A} 
$T_{\varphi_1, \varphi_2} = 0$, if and only if,
$\varphi_1= \varphi_2 = 0$.
\end{Proposition}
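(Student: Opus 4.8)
The plan is to prove the nontrivial implication: that $T_{\varphi_1,\varphi_2}=0$ forces $\varphi_1=\varphi_2=0$; the converse is immediate, since both symbols vanishing gives $T_{\varphi_1,\varphi_2}f_+ = P^+(0)=0$ for every $f_+$. The key observation I would exploit is that the two symbols are \emph{decoupled} by the even/odd splitting of $H_+^2$. Indeed, for an even element $f_+$ (one with $f_+(-z)=f_+(z)$) the combination $\tfrac{f_+(z)-f_+(-z)}{2}$ vanishes, so the defining formula collapses to $T_{\varphi_1,\varphi_2}f_+ = P^+(\varphi_1 f_+)$; symmetrically, for an odd element $f_+$ (with $f_+(-z)=-f_+(z)$) one gets $T_{\varphi_1,\varphi_2}f_+ = P^+(\varphi_2 f_+)$. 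Since $H_+^2$ is the orthogonal direct sum of its even part (spanned by $z^{2k}$, $k\geq 0$) and its odd part (spanned by $z^{2k+1}$, $k\geq 0$), and $T_{\varphi_1,\varphi_2}$ is linear, the hypothesis $T_{\varphi_1,\varphi_2}=0$ is equivalent to the pair of conditions $P^+(\varphi_1 f_+)=0$ for every even $f_+$ and $P^+(\varphi_2 f_+)=0$ for every odd $f_+$.

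To conclude, I would test these against the monomials that span each parity class and read off Fourier coefficients. Applying the first condition with $f_+=z^{2k}$, the relation $P^+(\varphi_1 z^{2k})=0$ says that $\hat\varphi_1(m-2k)=0$ for all $m\geq 0$, that is, $\hat\varphi_1(j)=0$ for every $j\geq -2k$. Letting $k\to\infty$ sends $-2k\to-\infty$, so $\hat\varphi_1(j)=0$ for all $j\in\Int$ and hence $\varphi_1=0$ a.e.\ on $\Circle$. The identical argument applied to the odd monomials $f_+=z^{2k+1}$ yields $\hat\varphi_2(j)=0$ for every $j\geq -(2k+1)$, whence $\varphi_2=0$ after letting $k\to\infty$. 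This gives $\varphi_1=\varphi_2=0$, as required.

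I do not expect a genuine obstacle here: once the parity decoupling is recognised, the remainder is the standard fact that a symbol whose products with arbitrarily high powers of $z$ are annihilated by $P^+$ must vanish. The only point requiring slight care is to notice that testing $\varphi_1$ against the even monomials alone (and $\varphi_2$ against the odd ones alone) already pins down \emph{all} Fourier modes, which works precisely because the shifts $2k$ (respectively $2k+1$) are unbounded, so every negative Fourier index is eventually reached in the limit $k\to\infty$.
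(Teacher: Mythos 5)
Your proof is correct and follows essentially the same route as the paper: exploit the even/odd decoupling, then test against the monomials $z^{2k}$ and $z^{2k+1}$ to push all Fourier coefficients of $\varphi_1$ and $\varphi_2$ to zero. The paper phrases the conclusion as $\varphi_1\in(\bar z^2)^n H_-^2$ for all $n$ rather than in terms of individual Fourier coefficients, but the argument is the same.
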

\begin{proof}
We have $T_{\varphi_1, \varphi_2} = 0$, if and only if,
$\Ker(T_{\varphi_1, \varphi_2}) = H_+^2$,
\ie, for any $f_+ \in H_+^2$
$$
  \varphi_1(z) \ \frac{f_+(z)+f_+(-z)}{2} 
  + \varphi_2(z) \ \frac{f_+(z)-f_+(-z)}{2} 
  \in H_-^2
  \,.
$$
In particular, this must hold for any even function $f_+ \in H_+^2$.
Taking $f_+ = 1$, this implies that $\varphi_1 \in H_-^2$;
taking $f_+ = z^{2n}$ with $n \in \Nat$, we conclude that
$\varphi_1 \in (\bar{z}^2)^n H_-^2$ for all $n \in \Nat$
and, therefore, $\varphi_1 = 0$.
Analogously, taking~$f_+$ to be any odd function in~$H_+^2$,
we conclude that $\varphi_2 = 0$.
\end{proof}
\begin{Corollary}\label{Corol.8B} 
For $\varphi_1, \varphi_2, \tilde\varphi_1, \tilde\varphi_2 \in L^\infty$,
we have  
$$
  T_{\varphi_1, \varphi_2} = T_{\tilde\varphi_1, \tilde\varphi_2} 
  \quad \Longleftrightarrow \quad
  (\varphi_1 = \tilde\varphi_1
  \ \land \
  \varphi_2 = \tilde\varphi_2)
  \,.
$$
\end{Corollary}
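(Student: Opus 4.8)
The plan is to reduce this statement directly to the preceding Proposition~\ref{Prop.8A} by exploiting the linear dependence of the operator $T_{\varphi_1,\varphi_2}$ on its symbol pair $(\varphi_1,\varphi_2)$. The reverse implication is immediate: if $\varphi_1=\tilde\varphi_1$ and $\varphi_2=\tilde\varphi_2$, then the two operators are defined by identical formulae and hence coincide. So the substance lies in the forward implication.

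The first and essentially only conceptual step I would take is to observe from the defining formula that the assignment $(\varphi_1,\varphi_2)\mapsto T_{\varphi_1,\varphi_2}$ is linear. Indeed, the symbols $\varphi_1,\varphi_2\in L^\infty$ enter the definition merely as pointwise multipliers acting before the projection $P^+$, so for every $f_+\in H_+^2$ one has
$$
  (T_{\varphi_1,\varphi_2}-T_{\tilde\varphi_1,\tilde\varphi_2})(f_+)
  = T_{\varphi_1-\tilde\varphi_1,\,\varphi_2-\tilde\varphi_2}(f_+)
  \,.
$$
This identity is a one-line verification, needing nothing more than the linearity of $P^+$ and of multiplication, together with the fact that the even/odd decomposition $f_+\mapsto\bigl(\tfrac{f_+(z)+f_+(-z)}{2},\tfrac{f_+(z)-f_+(-z)}{2}\bigr)$ does not involve the symbols.

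Granting this, the equality $T_{\varphi_1,\varphi_2}=T_{\tilde\varphi_1,\tilde\varphi_2}$ is equivalent to $T_{\varphi_1-\tilde\varphi_1,\,\varphi_2-\tilde\varphi_2}=0$. I would then invoke Proposition~\ref{Prop.8A} with the symbol pair $(\varphi_1-\tilde\varphi_1,\varphi_2-\tilde\varphi_2)$ to conclude that $\varphi_1-\tilde\varphi_1=0$ and $\varphi_2-\tilde\varphi_2=0$, which is precisely $\varphi_1=\tilde\varphi_1$ and $\varphi_2=\tilde\varphi_2$. I do not anticipate any genuine obstacle here: all the analytic content (the argument with even and odd test functions $z^{2n}$ and the conclusion $\varphi_j=0$) has already been discharged in Proposition~\ref{Prop.8A}, so the corollary is purely a matter of recording the linearity and applying the proposition to the difference of symbols. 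The only point requiring a moment's care is to state the linearity cleanly, since it is what licenses passing from an equality of operators to the vanishing of a single operator.
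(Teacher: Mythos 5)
Your proof is correct and follows exactly the route the paper intends: the corollary is stated without proof as an immediate consequence of Proposition~\ref{Prop.8A}, the implicit step being precisely the linearity of $(\varphi_1,\varphi_2)\mapsto T_{\varphi_1,\varphi_2}$ applied to the difference of symbol pairs. Nothing is missing.
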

\begin{Proposition} 
We have, for every $f_+ \in H_+^2$,
$$ 
  T_{\varphi_1, \varphi_2}^* (f_+) 
  = P^+ \left(
    \frac{\overline{\varphi_1(z)}+\overline{\varphi_2(z)}}{2}  \ f_+(z)
    + \frac{\overline{\varphi_1(-z)}-\overline{\varphi_2(-z)}}{2}  \ f_+(-z)
    \right) .
$$  
\end{Proposition}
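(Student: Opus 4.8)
The plan is to verify the formula directly from the definition of the adjoint, computing $(g_+, T_{\varphi_1,\varphi_2} f_+)$ for arbitrary $f_+, g_+ \in H_+^2$, rewriting it as $(h, f_+)$ for some $h \in H_+^2$ depending linearly on $g_+$, and then reading off $T_{\varphi_1,\varphi_2}^* g_+ = h$ from the defining relation $(g_+, T f_+) = (T^* g_+, f_+)$ of the (linear) adjoint. The first simplification is that, since $g_+ \in H_+^2$ and $P^+$ is the orthogonal projection of~$\sii$ onto $H_+^2$, one may drop the projection appearing in the definition of~$T$ inside the inner product: $(g_+, P^+ u) = (g_+, u)$ for every $u \in \sii$.

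It is convenient to introduce the reflection operator $R$ on~$\sii$ defined by $(Rf)(z) := f(-z)$. The substitution $z \mapsto -z$ in the integral over~$\Circle$ shows that $R$ is a self-adjoint unitary involution, and since $R$ leaves both $H_+^2$ and $H_-^2$ invariant it commutes with $P^\pm$. Consequently $E_\pm := \frac{1}{2}(I \pm R)$ are the orthogonal projections onto the even and odd subspaces of~$\sii$, and in particular $E_\pm^* = E_\pm$. Writing $M_\varphi$ for multiplication by $\varphi \in L^\infty$, for which $M_\varphi^* = M_{\bar\varphi}$, the definition of~$T$ becomes $T f_+ = P^+\bigl(M_{\varphi_1} E_+ + M_{\varphi_2} E_-\bigr) f_+$.

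Moving the operators across the inner product using these self-adjointness relations, and dropping $P^+$ as noted, gives $(g_+, T f_+) = \bigl(E_+ M_{\bar\varphi_1} g_+ + E_- M_{\bar\varphi_2} g_+,\, f_+\bigr)$. Because $f_+ \in H_+^2$, one may now reinsert $P^+$ on the left entry, whence $T^* g_+ = P^+\bigl(E_+ M_{\bar\varphi_1} + E_- M_{\bar\varphi_2}\bigr) g_+$. Evaluating the two projections explicitly, $E_+(\bar\varphi_1 g_+)(z) = \tfrac12\bigl(\overline{\varphi_1(z)}\,g_+(z) + \overline{\varphi_1(-z)}\,g_+(-z)\bigr)$ and $E_-(\bar\varphi_2 g_+)(z) = \tfrac12\bigl(\overline{\varphi_2(z)}\,g_+(z) - \overline{\varphi_2(-z)}\,g_+(-z)\bigr)$; adding these, regrouping the coefficients of $g_+(z)$ and $g_+(-z)$, and renaming $g_+$ to $f_+$ yields exactly the asserted formula.

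The computation is routine, so the only point demanding care is the effect of the reflection on the argument of the symbols: it is $\overline{\varphi_1(-z)}$ and $-\overline{\varphi_2(-z)}$, not $\overline{\varphi_j(z)}$, that multiply $f_+(-z)$ in the final expression. This asymmetry between the coefficients of $f_+(z)$ and $f_+(-z)$ is precisely what the even/odd projection bookkeeping produces, and it is where a sign or a misplaced argument would most easily slip in. Alternatively, one could imitate the direct inner-product chain used in the Proposition for $C_{\alpha,\beta}^*$ in Section~\ref{Ex1}, splitting $f_+$ and $g_+$ into even and odd parts and exploiting the orthogonality of even and odd functions in~$\sii$; the operator-level argument above is merely a streamlined repackaging of that calculation.
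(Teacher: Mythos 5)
Your proof is correct and is essentially the paper's argument: the paper computes $(g_+,Tf_+)$ directly, drops and reinserts $P^+$ using $f_+,g_+\in H_+^2$, moves $\overline{\varphi_j}$ across the inner product, and performs the substitution $z\mapsto -z$ on the term containing $f_+(-z)$ — which is exactly what your self-adjoint reflection $R$ and projections $E_\pm=\frac12(I\pm R)$ encode. The operator-level packaging $T=P^+(M_{\varphi_1}E_++M_{\varphi_2}E_-)$ is a clean restatement of the same computation, and your final bookkeeping of the arguments $\overline{\varphi_j(-z)}$ matches the asserted formula.
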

\begin{proof}
For any $f_+,g_+ \in H_+^2$, we have
$$
\begin{aligned}
  (g_+,T f_+) &=
  \left( 
  g_+(z), 
  P^+ \left(
    \frac{\varphi_1(z)+\varphi_2(z)}{2}  \ f_+(z)
    + \frac{\varphi_1(z)-\varphi_2(z)}{2}  \ f_+(-z)
  \right)     
  \right)
  \\
  &= \left( 
  g_+(z),  
    \frac{\varphi_1(z)+\varphi_2(z)}{2}  \ f_+(z)
  \right)
  +  \left( 
  g_+(z),  
    \frac{\varphi_1(z)-\varphi_2(z)}{2}  \ f_+(-z)
  \right)
  \\
  &= \left( 
  \frac{\overline{\varphi_1(z)}+\overline{\varphi_2(z)}}{2} \ g_+(z),  
  f_+(z)
  \right)
  +  \left( 
  \frac{\overline{\varphi_1(z)}-\overline{\varphi_2(z)}}{2} \ g_+(z),  
  f_+(-z)
  \right)
  \\
  &= \left( 
  \frac{\overline{\varphi_1(z)}+\overline{\varphi_2(z)}}{2} \ g_+(z),  
  f_+(z)
  \right)
  +  \left( 
  \frac{\overline{\varphi_1(-z)}-\overline{\varphi_2(-z)}}{2} \ g_+(-z),  
  f_+(z)
  \right)
  \\
  &=
  \left( 
  P^+ \ \frac{\overline{\varphi_1(z)}+\overline{\varphi_2(z)}}{2} \ g_+(z),
  f_+(z)
  \right)
  + \left( 
  P^+ \ \frac{\overline{\varphi_1(-z)}-\overline{\varphi_2(-z)}}{2} \ g_+(-z),
  f_+(z)
  \right)
  \,,
\end{aligned}
$$
which establishes the desired claim.
\end{proof}
\begin{Proposition} 
We have $C T_{\varphi_1, \varphi_2} C^{-1} = T_{\varphi_1, \varphi_2}^*$ 
if, and only if, 
\begin{equation}\label{and}
  -\bar{z} \varphi_1(z) + z \varphi_2(z) 
  = z \varphi_1(\bar{z}) - \bar{z} \varphi_2(\bar{z}) 
  \\
  \quad\mbox{and}\quad
  \bar{z} \varphi_1(z) + z \varphi_2(z) 
  = z \varphi_1(-\bar{z}) + \bar{z} \varphi_2(-\bar{z}) \,.
\end{equation}
\end{Proposition}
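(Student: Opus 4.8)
The plan is to compute both sides of the identity $C T_{\varphi_1,\varphi_2} C^{-1} = T_{\varphi_1,\varphi_2}^*$ as explicit integral/multiplication operators on $H_+^2$ and then compare coefficients. Since $C$ is anti-involutive with $C^2 = -I$, we have $C^{-1} = -C$, so I would first write $C T C^{-1} = -C T C$ and apply the operator to an arbitrary $f_+ \in H_+^2$. Using the explicit formula for $C|_{H_+^2}$, namely $Cf(z) = z\,\frac{\overline{f(\bar z)}+\overline{f(-\bar z)}}{2} - \bar z\,\frac{\overline{f(\bar z)}-\overline{f(-\bar z)}}{2}$, I would push $f_+$ through $C$, then through $T_{\varphi_1,\varphi_2}$, then through $C$ again (with the overall sign), keeping careful track of the argument substitutions $z \mapsto \bar z$ and $z \mapsto -\bar z$ introduced by each application of~$C$.

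The main bookkeeping device will be the even/odd decomposition $f_+(z) = \frac{f_+(z)+f_+(-z)}{2} + \frac{f_+(z)-f_+(-z)}{2}$, since both $T$ and $C$ are naturally expressed through it. I would project everything onto $H_+^2$ where needed, using the established fact (Corollary above) that $C P^\pm = P^\pm C$, which lets the projections $P^+$ commute past $C$ and keeps the computation inside the Hardy space. Having obtained a concrete formula for $C T_{\varphi_1,\varphi_2} C^{-1} f_+$ as a Toeplitz-type operator, I would then equate it with the already-computed adjoint $T_{\varphi_1,\varphi_2}^* f_+ = P^+\!\big(\frac{\overline{\varphi_1(z)}+\overline{\varphi_2(z)}}{2} f_+(z) + \frac{\overline{\varphi_1(-z)}-\overline{\varphi_2(-z)}}{2} f_+(-z)\big)$.

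The final step is to invoke the injectivity result Proposition~\ref{Prop.8A} together with Corollary~\ref{Corol.8B}: two operators of the form $T_{\psi_1,\psi_2}$ coincide if and only if their symbols coincide, $\psi_1 = \tilde\psi_1$ and $\psi_2 = \tilde\psi_2$. After rewriting $C T_{\varphi_1,\varphi_2} C^{-1}$ in the canonical form $T_{\psi_1,\psi_2}$ for suitable symbols $\psi_1,\psi_2$ built from $\varphi_1(z),\varphi_2(z),\varphi_1(\bar z),\varphi_2(\bar z),\varphi_1(-\bar z),\varphi_2(-\bar z)$ and the factors $z,\bar z$, matching the two symbols against those of $T^*$ will produce exactly the two scalar identities~\eqref{and}.

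The step I expect to be the main obstacle is the careful tracking of the three distinct arguments $\bar z$, $-\bar z$, and $-z$ that arise from the nested applications of~$C$ (each of which conjugates and substitutes), so that the resulting symbols are correctly identified with $\varphi_i$ evaluated at $\bar z$ versus $-\bar z$. A small sign or argument slip anywhere in this substitution chain would corrupt the final symbol comparison; the asymmetry between the two equations in~\eqref{and} (one with a sign pattern $-\bar z,z \leftrightarrow z,-\bar z$ on the even part, the other with $\bar z,z \leftrightarrow z,\bar z$ on the odd part) is precisely what must emerge from the even/odd components, so I would organise the computation by treating the even and odd parts of $f_+$ separately and reconstructing the two symbol equations from them.
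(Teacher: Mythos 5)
Your plan is correct and follows essentially the same route as the paper: explicit computation with the formula for $C$, even/odd bookkeeping, commuting $P^+$ past $C$, and concluding via the symbol-uniqueness of Corollary~\ref{Corol.8B}. The only (inessential) difference is that the paper avoids the double application of $C$ by rewriting the identity as $TC = CT^*$ and computing one application of $C$ on each side, whereas you compute $-CTC$ on one side; both lead to the same symbol comparison yielding~\eqref{and}.
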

\begin{proof}
On the one hand, we have
$$
\begin{aligned}
  T C f_+ &= P^+ \Bigg[
  \frac{\varphi_1(z)+\varphi_2(z)}{2}
  \left(
  \frac{z-\bar{z}}{2} \, \overline{f_+(\bar{z})}
  + \frac{z+\bar{z}}{2} \, \overline{f_+(-\bar{z})}
  \right)
  \\
  & \quad\qquad
  +\frac{\varphi_1(z)-\varphi_2(z)}{2}
  \left(
  -\frac{z-\bar{z}}{2} \, \overline{f_+(-\bar{z})}
  - \frac{z+\bar{z}}{2} \, \overline{f_+(-\bar{z})}
  \right)
  \Bigg]
  \\
  &= 
  \frac{1}{2} \, P^+ \left[
  (-\bar{z}\varphi_1(z) + z \varphi_2(z)) 
  \, \overline{f_+(\bar{z})}
  + (\bar{z}\varphi_1(z) + z \varphi_2(z)) 
  \, \overline{f_+(-\bar{z})}
  \right] .
\end{aligned}  
$$
On the other hand, we have
$$
\begin{aligned}
  C T^* f_+ 
  &= C P^+ \left[
  \frac{\overline{\varphi_1(z)}+\overline{\varphi_2(z)}}{2} f_+(z)
  + \frac{\overline{\varphi_1(-z)}-\overline{\varphi_2(-z)}}{2} f_+(-z)
  \right]
  \\
  &= P^+ C \left[
  \frac{\overline{\varphi_1(z)}+\overline{\varphi_2(z)}}{2} f_+(z)
  + \frac{\overline{\varphi_1(-z)}-\overline{\varphi_2(-z)}}{2} f_+(-z)
  \right]
  \\
  &= P_+ \Bigg[
  \frac{z-\bar{z}}{2} 
  \left(
  \frac{\varphi_1(\bar{z})+\varphi_2(\bar{z})}{2} \
  \overline{f_+(\bar{z})}
  + \frac{\varphi_1(-\bar{z})-\varphi_2(-\bar{z})}{2} \
  \overline{f_+(-\bar{z})}
  \right) 
  \\
  & \quad\qquad
  +\frac{z+\bar{z}}{2} 
  \left(
  \frac{\varphi_1(-\bar{z})+\varphi_2(-\bar{z})}{2} \
  \overline{f_+(-\bar{z})}
  + \frac{\varphi_1(\bar{z})-\varphi_2(\bar{z})}{2} \
  \overline{f_+(\bar{z})}
  \right) 
  \Bigg]
  \\
  &= 
  \frac{1}{2} \, P^+ \left[
  (z\varphi_1(\bar{z}) - \bar{z} \varphi_2(\bar{z})) 
  \, \overline{f_+(\bar{z})}
  + (z\varphi_1(-\bar{z}) + \bar{z} \varphi_2(-\bar{z})) 
  \, \overline{f_+(-\bar{z})}
  \right] .
\end{aligned}  
$$
Consequently, 
taking Corollary~\ref{Corol.8B} into account,
we have $TC = CT^*$ if, and only if,~\eqref{and} holds. 
\end{proof}
\begin{Remark}
Condition~\eqref{and} is particularly satisfied if
$$
  \varphi_1(\bar{z}) = \varphi_2(z) 
  \qquad \mbox{and} \qquad
  \frac{\varphi_1(z) -\varphi_1(-z)}{z}
  = - \frac{\varphi_1(\bar{z}) -\varphi_1(-\bar{z})}{\bar{z}}
  \,.
$$ 
\end{Remark} 

\subsection{Example 3}
Let $D_j$, with $j \in \{1,2,3,4\}$, be antilinear operators
on a Hilbert space~$\Hilbert$ and consider the antilinear operator
defined on $\Hilbert\oplus\Hilbert$ 
(which we identify with $\Hilbert^2$) by 
$$
  C := 
  \begin{pmatrix}
    D_1 & D_2 \\
    D_3 & D_4
  \end{pmatrix}
  .
$$
We have 
$$
  C^* = 
  \begin{pmatrix}
    D_1^* & D_3^* \\
    D_2^* & D_4^*
  \end{pmatrix}
  .
$$
Consequently,
$$
  CC^* = I 
  \qquad \Longleftrightarrow \qquad
  \left\{
  \begin{aligned}
    D_1 D_1^* + D_2 D_2^* = I \,, 
    \\
    D_3 D_3^* + D_4 D_4^* = I \,,
    \\
    D_1 D_3^* + D_2 D_4^* = 0 \,,
  \end{aligned}
  \right.
$$
and
$$
  C^* C = I 
  \qquad \Longleftrightarrow \qquad
  \left\{
  \begin{aligned}
    D_1^* D_1 + D_3^* D_3 = I \,, 
    \\
    D_2^* D_2 + D_4^* D_4 = I \,,
    \\
    D_1^* D_2 + D_3^* D_4 = 0 \,.
  \end{aligned}
  \right.
$$

Involutive conjugations of this form for $\Hilbert := H_+^2$ 
were studied in \cite{Camara-Klis-Garlicka-Ptak_2019,Ko-Lee-Lee_2022}.
Here we allow for a more abstract setting  
as well as for not necessarily involutive conjugations.
In particular, $C$~will be antinunitary and \emph{anti-involutive}
if, and only if,
$$
  \left\{
  \begin{aligned}
    D_1 &= -D_1^* \,, 
    \\
    D_4 &= -D_4^* \,,
    \\
    D_3 &= -D_2^* \,,
  \end{aligned}
  \right.
  \qquad \mbox{and} \qquad
  \left\{
  \begin{aligned}
    D_2 D_2^* - D_1^2 &= I \,,
    \\
    D_2^* D_2 - D_4^2 &= I \,,
    \\
    D_1 D_2 + D_2 D_4 &= 0 \,. 
  \end{aligned}
  \right.
$$

For example, if~$D$ is antiunitary and anti-involutive, then
$$
  C_1 := \frac{1}{\sqrt{2}}
  \begin{pmatrix}
    -D & D \\
    D & D
  \end{pmatrix}
$$
is also antiunitary and anti-involutive.
Another example is given with $D_1 = 0 = D_4$,
$D_2$ antiunitary and involutive, 
and $D_3 = -D_2^* = -D_2$; then
$$
  C_2 := \frac{1}{\sqrt{2}}
  \begin{pmatrix}
    0 & D_2 \\
    -D_2 & 0
  \end{pmatrix}
$$
is an anti-involutive conjugation on $\Hilbert \oplus \Hilbert$.
\begin{Proposition}\label{Prop11} 
For any operator~$p$ on~$\Hilbert$ such that $D_2 p D_2 = - p^*$,
we have that 
$$
  T_\alpha :=   
  \begin{pmatrix}
    p^2 & \alpha p \\
    p & p^2
  \end{pmatrix}
  , \qquad
  \alpha \in \Real \,,
$$
is $C_2$-self-adjoint.  
\end{Proposition}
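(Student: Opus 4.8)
The plan is to verify the defining relation $C_2 T_\alpha C_2^{-1} = T_\alpha^*$ directly by matrix computation, exploiting the explicit block-antiunitary structure of $C_2$. First I would record the relevant adjoint: since $T_\alpha$ acts on $\Hilbert \oplus \Hilbert$ by a block matrix of (linear) operators, its adjoint is the conjugate-transpose of the block matrix,
\begin{equation*}
  T_\alpha^* =
  \begin{pmatrix}
    (p^2)^* & p^* \\
    \alpha (p)^* & (p^2)^*
  \end{pmatrix}
  =
  \begin{pmatrix}
    (p^*)^2 & p^* \\
    \alpha p^* & (p^*)^2
  \end{pmatrix}
  \,.
\end{equation*}
Because $C_2$ is anti-involutive (it is one of the explicit anti-involutive conjugations constructed just above), one has $C_2^{-1} = -C_2 = C_2^*$, so the claim $C_2 T_\alpha C_2^{-1} = T_\alpha^*$ is equivalent to $C_2 T_\alpha = T_\alpha^* C_2$, an identity I would check by multiplying out the two sides as $2\times 2$ block products.

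Next I would compute both composite block operators explicitly using $C_2 = \frac{1}{\sqrt 2}\begin{pmatrix} 0 & D_2 \\ -D_2 & 0 \end{pmatrix}$. Since $D_2$ is antilinear, the key algebraic facts I would use throughout are the hypothesis $D_2 p D_2 = -p^*$ together with $D_2^2 = I$ (as $D_2$ is antiunitary and involutive), which rearrange to $D_2 p = -p^* D_2$ and hence also $D_2 p^2 = -p^* D_2 p = (p^*)^2 D_2$, i.e.\ $D_2 p^2 = (p^2)^* D_2$. The point is that conjugation by $D_2$ sends $p$ to $-p^*$ and $p^2$ to $(p^2)^*$; these are exactly the intertwining relations that will make the two sides agree. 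Carrying out the block multiplication, the $(1,2)$ and $(2,1)$ entries (carrying the $\alpha p$ and $p$ terms) will match because of the single-power relation $D_2 p = -p^* D_2$ balanced against the signs coming from the off-diagonal placement of $D_2$ in $C_2$, while the diagonal entries (carrying $p^2$) match via $D_2 p^2 = (p^2)^* D_2$. I would present this as a short explicit comparison of the four block entries rather than grinding every term.

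The main obstacle, and the place to be careful, is bookkeeping of signs and of the antilinearity of $D_2$ when it passes across linear operators: one must consistently use $D_2 p = -p^* D_2$ (not $p D_2$) and track the two minus signs sitting in $C_2$, since a misplaced sign would spuriously break the $(1,2)$/$(2,1)$ matching. A secondary point worth stating explicitly is why $D_2 p^2 = (p^2)^* D_2$ follows from the hypothesis $D_2 p D_2 = -p^*$ alone: applying the relation twice gives $D_2 p^2 = (D_2 p D_2)(D_2 p) = (-p^*)(-p^* D_2)= (p^*)^2 D_2$, using $D_2^2 = I$ in the middle. Once these intertwining identities are in hand, the verification that $C_2 T_\alpha = T_\alpha^* C_2$ is a direct entrywise computation, which establishes $C_2 T_\alpha C_2^{-1} = T_\alpha^*$ and hence the $C_2$-self-adjointness of $T_\alpha$ for every $\alpha \in \Real$.
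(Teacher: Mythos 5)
Your proof is correct and takes essentially the same approach as the paper: a direct block-matrix verification using the hypothesis $D_2 p D_2 = -p^*$ (equivalently $D_2 p = -p^* D_2$) and its squared consequence $D_2 p^2 D_2 = (p^*)^2$. The only cosmetic difference is that the paper multiplies out the triple product $C_2 T_\alpha C_2^{-1}$ directly, whereas you rearrange the claim into the equivalent intertwining identity $C_2 T_\alpha = T_\alpha^* C_2$ before comparing block entries.
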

\begin{proof}
We have
$$
\begin{aligned}
  C_2T_\alpha C_2^{-1} 
  &=
  \begin{pmatrix}
    0 & D_2 \\
    -D_2 & 0
  \end{pmatrix}
  \begin{pmatrix}
    p^2 & \alpha p \\
    p & p^2
  \end{pmatrix}
  \begin{pmatrix}
    0 & -D_2 \\
    D_2 & 0
  \end{pmatrix}  
  \\
  &=  
  \begin{pmatrix}
    D_2 p^2 D_2 & -D_2 p D_2 \\
    -\alpha D_2 p D_2 & D_2 p^2 D_2
  \end{pmatrix}  
  \\
  &=
  \begin{pmatrix}
    (p^*)^2 & p^* \\
    \alpha p^* & (p^*)^2 
  \end{pmatrix}
  = T_\alpha^*
  \,,
\end{aligned}
$$
which establishes the desired result
\end{proof}
\begin{Remark}
The example of Proposition~\ref{Prop11} 
is of the type of the toy model considered in Section~\ref{Sec.toy},
except that in the former only bounded operators are considered.
\end{Remark} 
%

 	
\subsection*{Acknowledgment}
C.C.\ was partially supported by FCT/Portugal through CAMGSD, IST-ID, projects
UIDB/04459/2020 and UIDP/04459/2020.
D.K.\ was partially supported by the EXPRO grant No.~20-17749X
of the Czech Science Foundation.
D.K. is 
also 
grateful to Instituto Superior T\'ecnico,
where the ideas of this paper were discussed,
for partially supporting his stays in 2020 and 2021.

\newpage
%
%

\providecommand{\bysame}{\leavevmode\hbox to3em{\hrulefill}\thinspace}
\providecommand{\MR}{\relax\ifhmode\unskip\space\fi MR }
\providecommand{\MRhref}[2]{%
  \href{http://www.ams.org/mathscinet-getitem?mr=#1}{#2}
}
\providecommand{\href}[2]{#2}

\end{document}